\theoremstyle{plain}
\newtheorem{theorem}{Theorem}[section]
\newtheorem{lemma}[theorem]{Lemma}
\theoremstyle{definition}
\DeclareMathOperator*{\OmSum}{\mathlarger{\mathlarger{\Omega}}}
\newcommand{\slog}{\text{slog}}
\newcommand{\tet}{\uparrow \uparrow}
\newcommand{\up}{\uparrow}
\begin{document}

\title{Hyper-operations By Unconventional Means}

\author{James David Nixon\\
	JmsNxn92@gmail.com\\}

\maketitle

\begin{abstract}
The author makes use of infinite compositions and a limiting function to construct a $\mathcal{C}^\infty$ tetration function $\mathcal{F}(t) = e \tet t$. As a tetration function, $\mathcal{F}$ satisfies $e^{\mathcal{F}(t)} = \mathcal{F}(t+1)$. Of it, $\mathcal{F}$ takes $(-2,\infty) \to \mathbb{R}$ bijectively with strictly monotone growth, and is continuously differentiable here. We then iterate this construction to derive arbitrary hyper-operations $e\up^k t$. These hyper-operations are $\mathcal{C}^\infty$ strictly monotone bijections of $(\alpha_k,\infty) \to \mathbb{R}$ for $k$ even ($1-k > \alpha_k \ge -k$), and $\mathcal{C}^\infty$ strictly monotone bijections of $\mathbb{R} \to (\alpha_k, \infty)$ for $k$ odd. These hyper-operations satisfy the functional equation $e \up^{k-1} (e \up^k t) = e \up^k (t+1)$ with the initial conditions $e \up^1 t = e^t$ and $e \up^k 0 = 1$.
\end{abstract}

\emph{Keywords:} Complex Analysis; Infinite Compositions; Tetration; Hyper-operations.\\

\emph{2010 Mathematics Subject Classification:} 30D05; 39B12; 39B32\\

\section{Introduction}\label{sec1}
\setcounter{equation}{0}

The study of tetration can be traced back centuries. The first notable moment being, when Leonhard Euler proved that when $e^{-e} < \alpha < e^{1/e}$; the infinite tower converges \cite{Eul}. Quote unquote, for such $\alpha$,

\[
\alpha^{\displaystyle \alpha^{\displaystyle ...(n\,\text{times})...^{\displaystyle\alpha}}} \to A\,\,\text{as}\,\,n\to\infty
\]

Where $\alpha^A = A$ and $e^{-1} \le A \le e$. This was a monumentous moment in the study of iterated exponentials; and is probably the first truly publishable result in the history of tetration--hell, complex dynamics. Though the subject of tetration remains rather dormant; if we do not count numerous forays into the complex dynamics of the exponential function; it poses itself as a very interesting problem. The author would like to think, at the center of the study of iterated exponentials is the study of tetration functions; and the quest for a good and right solution should be a priority. But as the pulse of the subject is weak, he feels not enough people feel the same way.

A tetration function is simple enough to describe, but proves a very difficult function to construct (if we want a good and right solution). We'll restrict our attention to tetration functions with the base $e$. For bases $b > e^{1/e}$ the result is similar. If $\mathcal{F}$ is a function such that $e^{\mathcal{F}(s)} = \mathcal{F}(s+1)$ and $\mathcal{F}(0) = 1$, then we call $\mathcal{F}$ a tetration function. It provides a similar continuation of the sequence $e, e^e, e^{e^e},...$ as $e^s$ provides a continuation of the sequence $e, ee, eee,...$. In contrast to the exponential function though, it is a much more volatile construction.

One facet of its volatility, tetration functions are highly non-unique. For any tetration function $\mathcal{F}$ the function $\mathcal{F}(s + \sin(2 \pi s))$ is also a tetration function; as is $\mathcal{F}(s+ \theta(s))$ for any $1$-periodic function. Alors, a large problem with tetration, is qualifying a tetration function as unique, or as satisfying some property which characterizes it from other tetration functions.

There are quite a few trivial ways to construct a tetration function, for instance letting $\mathcal{F}(t) = 1+t$ for $t \in [-1,0]$ and extending $\mathcal{F}$ to $\mathbb{R}^+$ using the functional equation provides such a solution. But necessarily such a solution is not differentiable on the natural numbers. One can construct many continuous extensions in a parallel manner.

If we were to ask for a tetration function, we would at least require that it be analytic. Even better, that it be holomorphic on some domain in the complex plane including $\mathbb{R}^+$.

Of that end, Hellmuth Kneser was the first to undoubtedly provide a desirable solution to the tetration equation \cite{Kne}. Insofar as it took the real positive line to itself and was holomorphic. He worked exclusively with the base $b = e$, and managed to construct a function $\mathcal{H}(s) =\, ^s e$ holomorphic in $\mathbb{C}$ excluding the line $(-\infty,-2]$. His construction, although providing a stable solution, was highly esoteric and deeply expressed the difficulty of this problem. Constructing a holomorphic tetration function is no easy feat. To his talent, his sole goal was to construct $h$ such that $h(h(z)) = e^z$ and $h$ was real-valued; and he managed to construct the only true holomorphic tetration.

More recently, numerous attempts have been made to construct a simpler tetration function. There exists quite a few potential candidates for tetration; which exist scattered in the recesses of the internet. Since tetration has not gained widespread recognition as a notable problem--it is difficult to find published papers on the subject. Kneser's own paper \cite{Kne} is in German, and there exists no English translations--but there are synopses and breakdowns. The author's sole resource is to point the reader to The Tetration Forum \cite{Trp}--started by Henryk Trappman, it has grown into a hub of information on Tetration.

The problem with most modern approaches to tetration seem to be that these candidates can be numerically verified, but never rigorously justified to exist or converge. And in contrast; Kneser's construction, which is correct, is a rather laborious numerical procedure. Even more troubling with these flurry of candidates to tetration; proofs of analycity become even more difficult--and in most instances do not exist. Alongside a lack of proof for mere convergence, this can be troubling. If that wasn't bad enough; we can't even prove if two candidate tetration functions equal or not. But sometimes our floating point accuracy can appear to suggest so (or dissuade so).

Nonetheless there is still great headway being made in the field. A treasured example is in the work of Dmitri Kouznetsov \cite{Kou}. The author will not go into detail on Kouznetsov's method but will simply give a rough heuristic as motivation.

Supposing we had a nice function $g(s)$, which has some desirable growth properties, and we took the limiting function,

\[
G_n(s) = \log\log\cdots(n\,\text{times})\cdots\log g(s+n)
\]

Then $e^{G_n(s)} = G_{n-1}(s+1)$. If the limit were to converge as $n\to\infty$, then we would have our tetration function $G_n \to G$. That is, upto a normalization constant $\omega$ where $\mathcal{F}(s) = G(s+\omega)$ to ensure $\mathcal{F}(0) = 1$.

Kouznetsov chose a very wonderful function $g$, such that numerically everything worked out and provides us with a calculator's version of a holomorphic tetration function. The function $g$ was constructed through careful fixed-point analysis; and looks like an exponential sum of terms $e^{nLs}$ for $L$ a complex fixed point of $e^s$. Unfortunately, to rigorously justify convergence proves rather difficult. This becomes a sort of brick-wall. There is no in your hands proof that Kouznetsov's method actually works.

This paper, however, more closely parallels Peter Walker's work in \cite{Wlk}. We'll only be concerned with Real Analysis in this paper; as Walker was. Walker uses the exact trick Kouznetsov uses--and it predates Kouznetsov. Walker's approach was to choose the inverse Abel function of $e^t-1$ as $g$. Now where Walker's paper fails in holomorphy, it makes up for in cold hard $\epsilon/\delta$. This solution is a diffeomorphism of $(-2,\infty)\to\mathbb{R}$; a smooth bijection with inverse.

Par \c{c}a, the goal of this paper is to construct our own $g$, and show the convergence of the above limit--in the manner that Walker did. It needs to work well enough so we can iterate to get higher order hyper-operators. Where, not necessarily that Peter Walker's method would not work in this scenario; it just seems impossible to prove. Where as, the $g$ we choose to construct tetration; can be abstracted to higher orders. We do not need the unnecessarily complex procedure of constructing the inverse Abel function of $e^t -1$.

Our choice of $g$ will be very manufactured, and requires a familiarity with infinite compositions. To that end, we refer the reader to \cite{Nix}, where sufficient conditions are provided for an infinite composition to converge--and a familiarity with the subject is created. We shall not need anything from \cite{Nix}, but it exhibits the nuanced detail of the subject a bit more clearly. We will prove a modified form of the main result of \cite{Nix}; to keep this paper self-contained; but we will give little to no motivating intuition in this paper.

The essential trick is to bypass the difficulty of constructing a tetration function using Walker or Kouznetsov's trick by constructing a function which satisfies a similar functional equation as tetration, and exhibits the same growth properties.

To that end, our first goal is to construct an entire function $\phi(s)$ such that,

\[
\phi(s+1) = e^{s + \phi(s)}
\]

And take $\phi$ to be our $g$ from above. The real novelty of the work is held in constructing $\phi$. But it only really takes us writing out the equation for $\phi$ and justifying convergence. This is more of a taxing process than a difficult one. It is surprisingly simple to construct $\phi$. 

When generalizing to higher order hyper-operations, we will abandon the need for holomorphy. We'll focus exclusively on subsets of $\mathbb{R}$.

\begin{center}
* \quad * \quad *
\end{center}

We introduce briefly the notation for nested compositions, which allows for our construction of $\phi$. We will restrict from full generality, and only care about a subset of types of infinite compositions. Therein, if $h_j(s,z):\mathbb{C}^2 \to \mathbb{C}$ is a sequence of entire functions in both variables, then,

\[
\OmSum_{j=1}^n h_j(s,z) \bullet z = h_1(s,h_2(s,...h_n(s,z)))
\]

Where we are interested in letting $n\to\infty$. The study of infinite compositions is very nuanced, and for that reason constructing $\phi$ will require care. The type of convergence we'll need is one which is a bit simpler than the general case. To wit, we will call,

\[
\phi_n(s) = \OmSum_{j=1}^n e^{s-j+z}\bullet z \Big{|}_{z=0}
\]

Where if $h_j(s,z) = e^{s-j+z}$ then,

\[
\phi_n(s) = h_1(s,h_2(s,...h_n(s,0)))
\]

By design, $e^{s + \phi_n(s)} = \phi_{n+1}(s+1)$, so if this were to converge it would equal our desired function. The essential ingredient in our construction is for all compact disks $\mathcal{P},\mathcal{K} \subset \mathbb{C}$,

\[
\sum_{j=1}^\infty ||h_j(s,z)||_{s \in \mathcal{P}, z \in \mathcal{K}} < \infty
\]

Where $||...||$ is taken to mean the supremum norm.

\begin{center}
* \quad * \quad *
\end{center}

Then with this construction under our belt, we can iterate the procedure. Whereas Walker's construction inherently relies on the similarity between $e^x -1$ and $e^x$, and the ability to construct an Abel function--we don't need this in our case. And we can do what we used from $e \up t = e^t$ to construct $e \up \up t$; to construct $e \up \up \up t$, and so on; without a convenient Abel function. The thesis of this paper is the malleability of the approach to work in many exotic cases. And we don't really need an Abel function--as Walker's case requires.

Instead of needing to find a function $A$ such that,

\[
A(f(t)) = A(t) + 1\\
\]

We use the function $\Phi$,

\[
\Phi(t+1) = e^t f(\Phi(t))\\
\]

Which is easy to construct; and exhibits enough desirable growth properties. Almost analogous to $A$'s growth; a slightly larger growth; but there's more than enough control of the error. So without running the well dry, we'll start the paper.

\section{Constructing $\phi$}\label{sec2}
\setcounter{equation}{0}

The first thing we need to construct $\phi$ is a sort of normality condition. For all $\epsilon > 0$, there exists some $N$, such when $m \ge n  > N$,

\[
|\OmSum_{j=n}^{m} e^{s-j+z} \bullet z| = |e^{\displaystyle s-n+e^{ s-n-1+...^{e^{s-m+z}}}}| < \epsilon
\]

For $|z|<1$, and $s$ residing in some compact disk within $\mathbb{C}$. This then implies as we let $m\to\infty$, the tail of the infinite composition stays bounded. Forthwith, the infinite composition becomes a normal family, and proving convergence becomes simpler. 

The direct analogy we can make to this is with sums and products. Suppose we have a sequence of numbers $a_j \in \mathbb{C}$ and we wish to show their sum converges. Well the direct method is to prove,

\[
|\sum_{j=n}^m a_j| < \epsilon\\
\]

This is similar with products; for a sequence of numbers $b_j \to 1$; the direct method is to prove,

\[
|\prod_{j=n}^m b_j -1| < \epsilon\\
\]

So, in the compositional case, we're doing something very similar. We're trying to bound the tail of the infinite composition. The sort of bounds we get are, not exactly but close to, of the form,

\[
|\OmSum_{j=n}^{m} e^{s-j+z} \bullet z| \le \sum_{j=n}^m |e^{s-j+z}|
\]

And in this, we are comparing the tail of the composition to the tail of a sum. It is a bit more subtle how we actually do this; supremum norms across different domains arise. The lemma may be a tad cryptic; but this is a good way to initialize the idea. We provide a quick proof of this.

\begin{lemma}\label{lma2A}
For a compact disk $\mathcal{P} \subset\mathbb{C}$ and $|z| \le 1$: for all $\epsilon > 0$, there exists some $N$, such when $m \ge n  > N$

\[
||\OmSum_{j=n}^{m} e^{s-j+z} \bullet z||_{\mathcal{P},|z|\le 1} < \epsilon
\]
\end{lemma}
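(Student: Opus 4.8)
The plan is to control the nested exponential layer by layer, exploiting the fact that the outermost index $-n$ forces geometric decay once the inner layers are known to be bounded. Write $M = \sup_{s \in \mathcal{P}} \operatorname{Re}(s)$, which is finite since $\mathcal{P}$ is compact, and for a fixed $m$ introduce the partial tails
\[
W_k = \OmSum_{j=k}^{m} e^{s-j+z} \bullet z, \qquad n \le k \le m,
\]
so that $W_m = e^{s-m+z}$ and $W_k = e^{s-k+W_{k+1}}$ for $k < m$, with $W_n$ the quantity we must bound.

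First I would establish a uniform boundedness claim by downward induction on $k$: if $N \ge M+1$ and $n > N$, then $|W_k| \le 1$ for every $k$ with $n \le k \le m$. The base case $k = m$ follows from $|W_m| = e^{\operatorname{Re}(s) - m + \operatorname{Re}(z)} \le e^{M - m + 1} < 1$, using $\operatorname{Re}(z) \le |z| \le 1$ and $m > N \ge M+1$. For the inductive step, assuming $|W_{k+1}| \le 1$, I use $|e^w| = e^{\operatorname{Re}(w)}$ together with $\operatorname{Re}(W_{k+1}) \le |W_{k+1}| \le 1$ to get
\[
|W_k| = e^{\operatorname{Re}(s) - k + \operatorname{Re}(W_{k+1})} \le e^{M - k + 1} < 1,
\]
again because $k > N \ge M + 1$. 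The same chain of inequalities, applied once more with the boundedness of $W_{n+1}$ now in hand, yields the sharp tail estimate $|W_n| \le e^{M - n + 1}$.

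It then remains to turn this into the $\epsilon$ statement. Given $\epsilon > 0$, I would fix an integer $N \ge M+1$ large enough that $e^{M - N} < \epsilon$; then for every $m \ge n > N$ the estimate $|W_n| \le e^{M - n + 1} \le e^{M - N} < \epsilon$ holds, and since $M$ and the constant $1$ bounding $\operatorname{Re}(z)$ are the only data entering the argument, the bound is uniform over $s \in \mathcal{P}$ and $|z| \le 1$, which is exactly the supremum-norm claim.

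The step I expect to carry the real content is the boundedness induction, and the subtle point there is precisely why the heuristic comparison to the sum $\sum_j |e^{s-j+z}|$ advertised before the lemma is not literally what drives the argument. A composition does not telescope into a sum; instead the mechanism is self-bootstrapping, in that one must first know the inner tail $W_{k+1}$ is bounded (by $1$, say) in order to conclude that $\operatorname{Re}(W_{k+1})$ contributes only a bounded amount to the exponent of the next layer, at which point the $-k$ term dominates and reproduces the bound for $W_k$. Making sure this induction closes uniformly in $s$ and $z$, and that the threshold $N$ depends only on $\mathcal{P}$ and $\epsilon$ rather than on $m$, is the crux; everything else is the routine observation $\operatorname{Re}(w) \le |w|$ applied repeatedly.
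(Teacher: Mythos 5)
Your proof is correct and is essentially the paper's own argument: both bootstrap boundedness of the inner tail through the layers by induction (inner value lands in the unit disk, so the outermost factor $e^{s-n+\cdot}$ forces the whole composition below $e^{M-n+1}$), with the same threshold $N$ depending only on $\sup_{s\in\mathcal{P}}\operatorname{Re}(s)$ and $\epsilon$. The only cosmetic difference is that you work with explicit real parts where the paper phrases the same induction via the supremum norms $\rho_j = \|e^{s-j+z}\|_{\mathcal{P},|z|\le 1}$.
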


\begin{proof}
Let $|z| \le 1$ and $s \in \mathcal{P}$ be a compact disk in $\mathbb{C}$.  Set $h_j(s,z) = e^{s-j+z}$ and set $||h_j (s, z)||_{s\in\mathcal{P}, |z| \le 1} = \rho_j$. Pick $1>\epsilon > 0$, and choose $N$ large enough so when $n > N$,

\[
\rho_n < \epsilon
\]

Denote: $\phi_{nm}(s, z) =\OmSum_{j=n}^m h_j (s, z) \bullet z = h_n(s, h_{n+1}(s, ...h_m(s, z)))$. We go by induction on the difference $m-n = k$; which counts how many exponentials appear. When $k=0$ then,

\[
||\phi_{nn}(s,z)||_{|z| < 1, s \in \mathcal{P}} = ||h_n(s,z)||_{|z| < 1, s \in \mathcal{P}} = \rho_n < \epsilon
\]

Assume the result holds for $m-n < k$, we show it holds for $m-n = k$. Observe,

\begin{eqnarray*}
||\phi_{nm}(s,z)||_{|z| < 1, s \in \mathcal{P}} &=& ||h_n(s,\phi_{(n+1)m}(s,z))||_{|z| < 1, s \in \mathcal{P}}\\
&\le& ||h_n(s,z)||_{|z|<1, s \in \mathcal{P}}\\
&=& \rho_n < \epsilon\\
\end{eqnarray*}

Which follows by the induction hypothesis because $|\phi_{(n+1)m}(s,z)| < \epsilon < 1$; i.e: $m - n - 1 < k$.
\end{proof}

The next step is to observe that $\OmSum_{j=1}^m h_j(s,z)\bullet z$ is a normal family as $m\to\infty$, for $|z|<1$ and $s \in \mathcal{P}$, an arbitrary compact disk. This follows because the tail of this composition is bounded. For $m > N$,

\[
\OmSum_{j=1}^m h_j(s,z) = \OmSum_{j=1}^N h_j(s,z)\bullet \OmSum_{j=N+1}^m h_j(s,z)\,\bullet z\\
\]

But $||\OmSum_{j=N+1}^m h_j(s,z)\,\bullet z|| < \epsilon$. So as $m$ grows the function stays within the neighborhood. Therefore we can say $||\OmSum_{j=1}^m h_j(s,z)||_{|z|<1, s \in \mathcal{P}} < M$ for all $m$.

The second way to ideate this, is to, again, compare it to a sum. What we have done is made a comparison,

\[
|\OmSum_{j=1}^\infty e^{s-j+z}\bullet z| \le \sum_{j=1}^\infty |e^{s-j+z}|
\]

Where on the right hand side $|...|$ is a supremum norm across some compact set, and on the left hand side $|...|$ is across a compact set. This is a nice way to think about it; where because the sum converges, so does the infinite composition. It may seem strange; but the idea is,

\[
|\OmSum_{j=1}^m e^{s-j+z}\bullet z - \OmSum_{j=1}^n e^{s-j+z}\bullet z| \le \sum_{j=n}^m |e^{s-j+z}| < \epsilon\\
\]

Where on the right hand side $|...|$ is a supremum norm for some compact set, and on the left hand side $|...|$ is across a compact set. Which is very similar to the sum and product case, in which:

\begin{eqnarray*}
|\sum_{j=1}^m a_j - \sum_{j=1}^n a_j| &\le& \sum_{j=n}^m |a_j| < \epsilon\\
|\prod_{j=1}^m b_j - \prod_{j=1}^n b_j| &\le& A\sum_{j=n}^m |b_j-1| < \epsilon\,\,\text{for some}\,\,A \in \mathbb{R}^+\\
\end{eqnarray*}

From this we can prove our infinite composition converges, and construct our entire function $\phi(s)$.

\begin{theorem}\label{thm2A}
The expression

\[
\OmSum_{j=1}^\infty e^{s-j+z} \bullet z \Big{|}_{z=0} = \phi(s)
\]

is an entire function satisfying the identity $e^{s + \phi(s)} = \phi(s+1)$.
\end{theorem}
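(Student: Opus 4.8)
The plan is to establish uniform convergence of the partial compositions $\phi_n(s) = \OmSum_{j=1}^n e^{s-j+z}\bullet z\big|_{z=0}$ on every compact disk, invoke Weierstrass's theorem to conclude that the limit $\phi$ is entire, and then pass to the limit in a finite-level functional identity. The two facts I would lean on are both already assembled in the text: by Lemma \ref{lma2A} the tail $\OmSum_{j=n+1}^m e^{s-j+z}\bullet z$ has supremum norm below any prescribed $\epsilon$ on $\mathcal{P}\times\{|z|\le 1\}$ once $n$ is large, and the partial compositions form a normal family, so there is a uniform bound $M$ with $\big|\OmSum_{j=1}^k e^{s-j+z}\bullet z\big| \le M$ for every $k$, all $s\in\mathcal{P}$, and $|z|\le 1$.

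The key step is converting smallness of the tail into smallness of $\phi_m - \phi_n$. I would write $F_n(s,w) = \OmSum_{j=1}^n e^{s-j+z}\bullet z\big|_{z=w}$, an entire function of $(s,w)$. The self-similar factoring of the composition gives $\phi_m(s) = F_n(s, w_{nm}(s))$, where $w_{nm}(s) = \OmSum_{j=n+1}^m e^{s-j+z}\bullet z\big|_{z=0}$, while $\phi_n(s) = F_n(s,0)$. Since $|F_n| \le M$ on $|w|\le 1$, the Cauchy estimate bounds $|\partial_w F_n(s,w)| \le 2M$ uniformly in $n$ for $|w|\le 1/2$; hence, because $|w_{nm}(s)| < \epsilon$ by Lemma \ref{lma2A} (taking $\epsilon \le 1/2$),
\[
|\phi_m(s) - \phi_n(s)| = |F_n(s, w_{nm}(s)) - F_n(s,0)| \le 2M\epsilon.
\]
This shows $\{\phi_n\}$ is uniformly Cauchy on $\mathcal{P}$; as $\mathcal{P}$ is arbitrary, $\phi_n \to \phi$ uniformly on compact subsets of $\mathbb{C}$, and Weierstrass's convergence theorem makes $\phi$ entire.

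The functional equation then follows from a finite-level identity obtained by re-indexing. Shifting $s \mapsto s+1$ sends each inner exponential $h_{j+1}(s+1,\cdot)$ to $h_j(s,\cdot)$, which peels off the outermost factor $e^{s+z}$ and leaves exactly $\phi_n(s)$ in the exponent, so $\phi_{n+1}(s+1) = e^{s+\phi_n(s)}$ for every $n$. Letting $n\to\infty$ and using continuity of the exponential yields $\phi(s+1) = e^{s+\phi(s)}$, as claimed.

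I expect the main obstacle to be the derivative control in the second paragraph, namely the passage from ``the tail is small'' to ``$\phi_m-\phi_n$ is small.'' Lemma \ref{lma2A} only controls the tail itself, not how a perturbation of the seed $w$ propagates through the outer $n$-fold composition $F_n$; it is precisely the uniform bound $M$ from normality, fed into a Cauchy estimate, that supplies a Lipschitz constant in $w$ independent of $n$. Everything else -- the normality of the family, the index-shift for the functional equation, and the final passage to the limit -- is routine once this uniform Lipschitz bound is in hand.
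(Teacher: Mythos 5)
Your proposal is correct, and it reaches the conclusion by a genuinely different decomposition than the paper's. The paper estimates \emph{consecutive} differences: it writes $\phi_{m+1}(s,z) = \phi_m(s,e^{s-m-1+z})$, expands this in the full Taylor series of $\phi_m(s,\cdot)$ about $z$, feeds in the Cauchy bounds $\|\tfrac{d^k}{dz^k}\phi_m\| \le M2^{k+1}k!$ on $|z|\le 1/2$ (obtained, as in your argument, from the normality bound $M$), and arrives at $\|\phi_{m+1}(s,0)-\phi_m(s,0)\|_{\mathcal{P}} \le Ae^{-m}$; uniform convergence then comes from telescoping this summable series. You instead compare $\phi_m$ and $\phi_n$ \emph{directly}, factoring $\phi_m(s) = F_n(s,w_{nm}(s))$ through the common head $F_n$ with the whole tail $w_{nm}$ as a perturbed seed, so that Lemma \ref{lma2A} makes the perturbation smaller than $\epsilon$ and a single first-order Cauchy (Lipschitz) estimate $|\partial_w F_n|\le 2M$, uniform in $n$, closes the Cauchy criterion in one step. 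Both proofs rest on the same two pillars (the tail lemma and the normality bound $M$) and finish identically (Weierstrass plus passing to the limit in $\phi_{n+1}(s+1)=e^{s+\phi_n(s)}$), but the emphasis differs: your route is shorter, needs only the first derivative rather than a summed series of derivative bounds, and actually uses Lemma \ref{lma2A} twice (for normality and for the seed perturbation), whereas the paper uses it only to establish normality. What the paper's route buys in exchange is a quantitative geometric rate $\|\phi_{m+1}-\phi_m\| \le Ae^{-m}$ and, more importantly, the telescoping-sum template that is recycled almost verbatim for $\tau$, $\Lambda$, and the higher hyper-operations in Sections 3 through 7; your argument, while cleaner here, does not by itself produce the rate that those later sections imitate. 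The one point to keep explicit is the restriction $\epsilon \le 1/2$ so that the segment from $0$ to $w_{nm}(s)$ stays inside the disk where the Lipschitz bound holds -- which you noted.
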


\begin{proof}
Since $\phi_m(s,z) = \OmSum_{j=1}^m e^{s-j+z}\bullet z$ are a normal family; there is some constant $M \in \mathbb{R}^+$ such,

\[
||\frac{d^k}{dz^k} \phi_m(s,z) ||_{|z|<\frac{1}{2}, s \in\mathcal{P}} \le M2^{k+1} \cdot k! 
\]

We can achieve this through Cauchy's Integral Theorem; and a couple uses of the supremum norm; because,

\begin{eqnarray*}
\frac{d^k}{dz^k} \phi_m(s,z) &=& \frac{k!}{2\pi i} \int_{|\xi|=1} \frac{\phi_m(s,\xi)}{(\xi - z)^{k+1}}\,d\xi\\
||\frac{d^k}{dz^k} \phi_m(s,z) ||_{|z|<\frac{1}{2}, s \in\mathcal{P}} &\le& \frac{k!}{2\pi} \int_{|\xi|=1} 2^{k+1}M\,d\xi\\
\end{eqnarray*}

When $|z| \le 1/2$ and $|\xi| = 1$, $|\xi - z| \ge \frac{1}{2}$. Secondly, using Taylor's theorem; and expanding $\phi_{m+1}(s,z) = \phi_m(s,e^{s-m-1+z})$ about $\phi_m(s,z)$'s Taylor's series about $z$:

\begin{eqnarray*}
\phi_{m+1}(s,z) - \phi_m(s,z) &=& \phi_m(s,e^{s-m-1+z}) - \phi_m(s,z)\\
&=& \sum_{k=1}^\infty \frac{d^k}{dz^k} \phi_m(s,z) \frac{(e^{s-m-1+z} - z)^k}{k!}\\
&=& (e^{s-m-1+z} - z) \sum_{k=1}^\infty \frac{d^k}{dz^k} \phi_m(s,z) \frac{(e^{s-m-1+z} - z)^{k-1}}{k!}\\
\end{eqnarray*}

This series converges for at least $|z| < \delta$ and $m>N$ large enough. We don't care about $\delta$ because we set $z=0$. Then,

\[
||\phi_{m+1}(s,0) - \phi_m(s,0)||_{s \in \mathcal{P}} \le ||e^{s-m+1}||_{s\in\mathcal{P}}\sum_{k=1}^\infty M 2^{k+1}||e^{s-m+1}||_{s\in\mathcal{P}}^{k-1}\\
\]

The series on the right can be bounded by some $C \in \mathbb{R}^+$; because for large enough $m>N$ the term $||e^{s-m+1}||_{s\in\mathcal{P}} < \frac{1}{2}$. Applying the bounds,

\[
||\phi_{m+1}(s,0) - \phi_m(s,0)||_{s \in \mathcal{P}} \le C ||e^{s-m-1}||_{s \in \mathcal{P}} = A e^{-m}
\]

For some $A \in \mathbb{R}^+$. Assume,

\[
\sum_{j=n}^{m-1} e^{-j} < \frac{\epsilon}{A}\\
\]

Then,

\begin{eqnarray*}
||\phi_m(s,0) - \phi_n(s,0)||_{s \in \mathcal{P}} &\le& \sum_{j=n}^{m-1}||\phi_{j+1}(s,0) - \phi_j(s,0)||_{s \in \mathcal{P}}\\
&\le& \sum_{j=n}^{m-1} A e^{-j}\\
&<&\epsilon\\
\end{eqnarray*}

And we can see the telescoping series converges and $\phi_m(s)$ must be uniformly convergent for $s \in \mathcal{P}$, and therefore defines a holomorphic function $\phi(s)$ as $m\to\infty$. Naturally $e^{s + \phi_m(s)} = \phi_{m+1}(s+1)$, and so therefore the functional equation is satisfied. Since $\mathcal{P}$ was an arbitrary compact set in $\mathbb{C}$, we know $\phi$ is entire.
\end{proof}

\section{The correction term $\tau$}\label{sec3}
\setcounter{equation}{0}

The main philosophy of our approach to constructing tetration is to add a corrective term to $\phi$ such that it becomes a tetration function. The function $\phi$ already looks very close to tetration, satisfying a similar functional equation,

\[
\phi(s+1) = e^{s + \phi(s)}
\]

We will introduce a sequence of correction terms as follows:

\[
\log \log \cdots (n\,\text{times}) \cdots \log \phi(s+n) = \phi(s) + \tau_n(s)
\]

Where inductively, starting with $\tau_1(s) = s$ and $\tau_0(s)=0$; $\tau_n$ can be defined,

\begin{eqnarray*}
\tau_{n+1}(s) &=& \log\big{(}\phi(s+1) + \tau_n(s+1)\big{)} - \phi(s)\\
&=& \log\phi(s+1) + \log\big{(}1 + \frac{\tau_n(s+1)}{\phi(s+1)}\big{)} - \phi(s)\\
&=& s + \phi(s) + \log\big{(}1 + \frac{\tau_n(s+1)}{\phi(s+1)}\big{)} - \phi(s)\\
&=& s + \log(1 + \frac{\tau_n(s+1)}{\phi(s+1)})\\
\end{eqnarray*}

Our choice of $\log$ is defined implicitly by the relation,

\[
e^{\phi(s) + \tau_{n+1}(s)} = \phi(s+1) + \tau_n(s+1)
\]

And the restriction that $\tau_n(s)$ is real on the real-line. The first thing to note, is that for $s =t \in \mathbb{R}^+$, the sequence of functions $\tau_n$ converge uniformly on bounded intervals greater than $T$. This is because $0 < \tau_{n+1}(t+1) / \phi(t+1) < 1$ and using the relation $|\log(1+A) - \log(1+B)| \le |A - B|$,

\begin{eqnarray*}
|\tau_{n+1}(t) - \tau_n(t)| &\le& | \log(1 + \frac{\tau_{n}(t+1)}{\phi(t+1)}) - \log(1+ \frac{\tau_{n-1}(t+1)}{\phi(t+1)})|\\
&\le& \frac{1}{\phi(t+1)} |\tau_{n}(t+1) - \tau_{n-1}(t+1)|\\
&\le& \frac{1}{\phi(t+1)\phi(t+2)}|\tau_{n-1}(t+2) - \tau_{n-2}(t+2)|\\
&\vdots&\\
&\le& \frac{1}{\prod_{k=1}^n \phi(t+k)}|\tau_1(t + n) - \tau_0(t+n)|\\
\end{eqnarray*}

Recalling that $\tau_1(t) = t$ and $\tau_0(t) = 0$ then,

\[
|\tau_{n+1}(t) - \tau_n(t)| \le \frac{t+n}{\prod_{k=1}^{n}\phi(t+k)} 
\]

And here $\phi(t)$ is monotone increasing and unbounded, so for some $T$ with $t > T$ it is $\phi(t)> \lambda > 1$. Now to show the telescoping sum converges uniformly on bounded intervals; pick an interval $\mathcal{I}$; and pick $n,m > N$ such,

\[
\sum_{j=n}^{m-1} \frac{||t+j||_{\mathcal{I}}}{\lambda^j} < \epsilon\\
\]

Then,

\begin{eqnarray*}
||\tau_m(t) - \tau_n(t)||_{\mathcal{I}} &\le& \sum_{j=n}^{m-1} ||\tau_{j+1}(t) - \tau_j(t)||_{\mathcal{I}}\\
&\le& \sum_{j=n}^{m-1} \frac{||t+j||_{\mathcal{I}}}{\lambda^j}\\
&<& \epsilon\\
\end{eqnarray*}

And we are given a function $\tau : \mathbb{R}_{t > T}^+ \to \mathbb{R}^+$ such that,

\[
\widetilde{\mathcal{F}}(t) = \phi(t) + \tau(t)
\]

And $\widetilde{\mathcal{F}}$ is a tetration function, albeit not yet normalized to $\widetilde{\mathcal{F}}(0)=1$, but there is an appropriate $\omega$ such that $\mathcal{F}(t) = \phi(t+ \omega) + \tau(t+\omega)$ is a true tetration function. Also by taking logarithms, the domain can be extended to its maximal $(-2, \infty)$. Call this function $\mathcal{F}$.

To assure that this construction hits no singularities along the way, we need that the derivative is monotone. Going through the same motions as above one can derive that $\mathcal{F}'$ is a continuous function and that $\tau_n' \to \tau'$ uniformly on bounded intervals. This is really no different then what we've already written. The function,

\[
\tau'_{n+1}(t) = 1 + \Big{(}\frac{1}{1 + \frac{\tau_n(t+1)}{\phi(t+1)}}\Big{)}\Big{(}\frac{\tau'_n(t+1)}{\phi(t+1)} - \frac{\tau_n(t+1)}{\phi(t+1)^2}\phi'(t+1)\Big{)}\\
\]

Grinding the gears of this expression we get $|\tau'_{n+1} - \tau'_n|$ is a convergent series of the same form as above. This is more of a task than a problem. It is left to the reader.

We need that $\mathcal{F}'(t) > 0$ for all $t \in (-2, \infty)$. From the expression above, $\tau'(t)-1\to 0$ as $t\to\infty$. So, eventually $\tau'(t) >0$ for some $t \ge T$. It is no hard fact to notice $\phi'(t) > 0$ for all $t \in \mathbb{R}$. Ergo, for $t\ge T$,

\[
F'(t) = \phi'(t+\omega) + \tau'(t+\omega) > 0\\
\]

Therefore, since,

\[
\mathcal{F}'(t-1) = \frac{\mathcal{F}'(t)}{\mathcal{F}(t)} > 0\\
\]

Thereby $\mathcal{F}'(t) > 0$ for $t \ge T-1$. By infinite descent we must have $\mathcal{F}'(t) > 0$ everywhere $\mathcal{F}(t+1) > 0$ which is $t>-2$. Therefore of this nature we have a differentiable inverse $\mathcal{A} = \mathcal{F}^{-1}(t) : \mathbb{R} \to (-2,\infty)$. In laymen's terms, amongst the jargon of people who study tetration; one calls this the super-logarithm. It is a continuously differentiable Abel function of $e^t$. En drame,

\[
\mathcal{A}(e^t) = \mathcal{A}(t) + 1\\
\]

These facts will be reinforced throughout this paper. Nonetheless it helps to introduce them when they can be conveniently introduced. We state this less than drastic theorem below.

\begin{theorem}\label{thm3A}
For some $\omega \in \mathbb{R}$ there exists a continuously differentiable tetration function $\mathcal{F}(t):(-2,\infty) \to \mathbb{R}$ such that $\mathcal{F}'(t) > 0$ and $\mathcal{F}$ is a bijection. This tetration function $\mathcal{F}(t)$ can be expressed as,

\[
\mathcal{F}(t) = \lim_{n\to\infty}\log \log\cdots(n\,\text{times})\cdots\log \phi(t+\omega+n)
\]

Where,

\[
\phi(t) = \OmSum_{j=1}^\infty e^{t-j+z}\bullet z \Big{|}_{z=0}\\
\]
\end{theorem}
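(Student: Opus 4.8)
The plan is to consolidate the construction carried out through Section~\ref{sec3} into a single statement, so the proof is mostly a matter of assembling pieces already in hand. First I would fix the notation $\mathcal{F}_n(t)=\log\log\cdots(n\,\text{times})\cdots\log\,\phi(t+n)$ and record the two facts that drive everything: by the definition of the correction terms we have the identity $\mathcal{F}_n(t)=\phi(t)+\tau_n(t)$, and by the functional equation for $\phi$ from Theorem~\ref{thm2A} we have the shifted recursion $e^{\mathcal{F}_n(t)}=\mathcal{F}_{n-1}(t+1)$ wherever the logarithms are defined. These two observations reduce the theorem to the convergence and regularity of the sequence $\tau_n$.

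Next I would invoke the estimate already derived,
\[
|\tau_{n+1}(t)-\tau_n(t)|\le\frac{t+n}{\prod_{k=1}^{n}\phi(t+k)},
\]
together with the lower bound $\phi(t)>\lambda>1$ for $t>T$, to conclude that $\tau_n\to\tau$ uniformly on bounded subintervals of $(T,\infty)$. Since a uniform limit of continuous functions is continuous, $\widetilde{\mathcal{F}}(t)=\phi(t)+\tau(t)$ is continuous there, and passing the recursion $e^{\mathcal{F}_n(t)}=\mathcal{F}_{n-1}(t+1)$ to the limit yields $e^{\widetilde{\mathcal{F}}(t)}=\widetilde{\mathcal{F}}(t+1)$, so $\widetilde{\mathcal{F}}$ is a tetration function on $(T,\infty)$.

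For differentiability and monotonicity I would run the identical telescoping argument on the derivatives, starting from the recursion for $\tau'_{n+1}$ displayed above; the resulting series $\sum|\tau'_{n+1}-\tau'_n|$ is of the same convergent form, so $\tau'_n\to\tau'$ uniformly on bounded intervals and $\widetilde{\mathcal{F}}$ is $\mathcal{C}^1$. Reading off $\tau'(t)\to1$ as $t\to\infty$, together with $\phi'(t)>0$, gives $\widetilde{\mathcal{F}}'(t)>0$ for $t\ge T$. I would then extend the domain backward by $\widetilde{\mathcal{F}}(t)=\log\widetilde{\mathcal{F}}(t+1)$, which is legitimate exactly while $\widetilde{\mathcal{F}}(t+1)>0$; differentiating gives $\widetilde{\mathcal{F}}'(t-1)=\widetilde{\mathcal{F}}'(t)/\widetilde{\mathcal{F}}(t)$, so positivity of the derivative descends one step at a time, and by infinite descent $\widetilde{\mathcal{F}}'>0$ on the maximal interval on which $\widetilde{\mathcal{F}}(t+1)>0$.

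Finally, normalization and bijectivity. The backward extension produces a strictly increasing $\mathcal{C}^1$ function whose iterated logarithm blows down to $-\infty$ at the left end of its maximal domain and which is unbounded above, so its range is all of $\mathbb{R}$; the intermediate value theorem then supplies $\omega$ with $\widetilde{\mathcal{F}}(\omega)=1$. Setting $\mathcal{F}(t)=\widetilde{\mathcal{F}}(t+\omega)=\phi(t+\omega)+\tau(t+\omega)$ gives $\mathcal{F}(0)=1$ and the claimed limit formula, and the functional equation forces $\mathcal{F}(-1)=0$ and hence $\mathcal{F}(t)\to-\infty$ as $t\to-2^{+}$, pinning the domain to $(-2,\infty)$; strict monotonicity with these two endpoint limits makes $\mathcal{F}$ a bijection onto $\mathbb{R}$. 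I expect the genuine obstacle to be this backward extension: one must verify simultaneously that $\widetilde{\mathcal{F}}$ stays positive on $(-1,\infty)$ so the logarithms never meet a singularity, and that the derivative relation propagates cleanly through the infinite descent all the way to $-2$ without degenerating, which is where the monotone unbounded growth of $\phi$ and the decay of the error in $\tau$ must be used most carefully.
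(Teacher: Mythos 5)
Your proposal is correct and follows essentially the same route as the paper: the decomposition $\mathcal{F}_n=\phi+\tau_n$, the telescoping bound $|\tau_{n+1}-\tau_n|\le (t+n)/\prod_{k=1}^n\phi(t+k)$ with $\phi>\lambda>1$, the parallel telescoping for $\tau'_n$ with $\tau'\to 1$, the infinite-descent propagation of $\mathcal{F}'>0$ via $\mathcal{F}'(t-1)=\mathcal{F}'(t)/\mathcal{F}(t)$, and the shift by $\omega$ with backward extension by logarithms to $(-2,\infty)$. The only difference is cosmetic: you inline the endpoint analysis ($\mathcal{F}(-1)=0$, $\mathcal{F}(t)\to-\infty$ as $t\to-2^{+}$) that the paper defers to its appendix.
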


This provides us with a continuously differentiable tetration function $\mathcal{F}$ defined for $(-2,\infty)$, but it sadly says nothing of the case for complex numbers. This proves to be a much more exhausting challenge--the author could not resolve it. Instead the author will show that $\tau$ is infinitely differentiable.

\section{Showing infinite differentiability}\label{sec4}
\setcounter{equation}{0}

The infinite differentiability of $\tau$ is a fun exercise in compositional analysis. We really have to pull out all the stops; the idea is inductive; and the proofs are mostly just lemmas. It's a good exercise in bounding compositions with sums. It's a good exercise in exotic iterative procedures, and the quickness of $\Omega$-notation. For that, the work to follow may seem terse; or abnormal. The author will be diligent in explaining every conclusion to the best of his ability. But he'd like it if the reader could appreciate the malleability of the work.

Since $\phi$ is so well behaved, and $\frac{1}{\phi}$ is smaller than any iterate of the exponential, we can make cake work of this. If we take each $\tau_m$ then,

\[
|\tau_m(t) - t| \le \sum_{j=1}^m \frac{t+j}{\prod_{c=1}^j \phi(t+c)}\\
\]

For all $t \ge T$. This relationship is also satisfied for all derivatives. We'll write this a tad simpler,

\[
|\frac{d^l}{dt^l} \tau_{m}(t) - \frac{d^l}{dt^l} t| \le A_{lm} e^{-e^t}\\
\]

This can be shown by induction on $m$ for each $l$. To visualize the argument, use a sequence of L'H\^{o}pital/Bernoulli comparisons, and prove them inductively on $m$ using the functional equation of $\tau_m = t + \log(1+\frac{\tau_{m-1}(t+1)}{\phi(t+1)})$,

\begin{eqnarray*}
\frac{\tau_m(t)}{t} &=& 1 + \mathcal{O}(e^{-e^t})\\
\tau_m'(t) &=& 1 + \mathcal{O}(e^{-e^t})\\
\tau''_m(t) &=& \mathcal{O}(e^{-e^t})\\
&\vdots&\\
\tau^{(l)}_m(t) &=& \mathcal{O}(e^{-e^t})\\
\end{eqnarray*}

This isn't far fetched considering that $\frac{1}{\phi(t)} \le \frac{1}{\exp^{\circ n}(t)}$, and the decay is uniform as we take derivatives (in fact it's more than Schwartz). This sequence of bounds is pretty weak; but stronger would be over-kill. As they stand, how strong these bounds are, isn't so much needed either; but it reduces some of the complexity of the problem. 

Continuing:

\[
\sum_{c=1}^\infty ||\tau_m^{(l)}(t+c) - \frac{d^l}{dt^l} (t+c)||_{t\ge T} \le \sum_{c=1}^\infty A_{lm}e^{-e^{c}} < \infty\\
\]

For $T$ large enough. Since we know $\tau$ is differentiable, we only care about $l>1$. Making this simpler. For $l > 1$,

\[
\sum_{c=1}^\infty ||\tau_m^{(l)}(t+c)||_{t\ge T} < \infty\\
\]

Now we remember that,

\[
\tau_{m+1}(t) = t + \log\big{(}1 + \frac{\tau_m(t+1)}{\phi(t+1)}\big{)}\\
\]

If we differentiate this $l$ times; there is a function $F_l$ in $l+1$ variables, such that,

\[
\tau^{(l)}_m(t) = F_l(t+1,\tau_{m-1}(t+1), \tau_{m-1}'(t+1),...,\tau^{(l)}_{m-1}(t+1))\\
\]

And the sequence $F_l$ is generated by,

\[
F_{l+1}(t,x_0,...,x_{l+1}) = \frac{\partial F_l}{\partial t} + \sum_{d=0}^{l} x_{d+1}\frac{\partial F_l}{\partial x_d}(t,x_0,...,x_l)\\
\]

Starting with,

\[
F_0(t,x_0) = t + \log\big{(}1 + \frac{x_0}{\phi(t+1)}\big{)}\\
\]

It's up to the reader to check that $F_0(t) - t = \mathcal{O}(e^{-e^t})$, $F_1(t) -1 = \mathcal{O}(e^{-e^t})$, and $F_l(t)=\mathcal{O}(e^{-e^t})$ for $l>1$. The philosophy of our approach is to boil all our questions into the well performed behaviour of $F_l$. Using infinite compositions, we arrive at the iterative formula:

\[
\tau_m^{(l)}(t) = \OmSum_{c=1}^m F_l(t+c,\tau_{m-c}(t+c),...,\tau_{m-c}^{(l-1)}(t+c),x)\bullet x \Big{|}_{x=0}
\]

Notice that this expression only uses the derivatives up to $l-1$, so if this infinite composition converges when we take $m\to\infty$, we're all done. Insofar as, a proof by strong induction on $l$ would suffice. This infinite composition looks a tad different than what we are used to so far--but it's not much of a different beast.

We can first note that we still have a summability criterion. For all $X \in \mathbb{R}^+$ and for large enough $t\ge T$; with $l > 1$,

\[
\lim_{m\to\infty}\sum_{c=1}^m ||F_l(t+c,\tau_{m-c}(t+c),...,\tau_{m-c}^{(l-1)}(t+c),x)||_{t\ge T, X \ge x \ge 0} < \infty
\]

And from this we can derive a normality theorem--as we've done before. And then the rest differs very little from before. We show this iteration converges in the next proof.

\begin{theorem}
The function $\tau$ is infinitely differentiable.
\end{theorem}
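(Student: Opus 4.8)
The plan is to prove, by strong induction on $l$, the stronger statement that $\tau$ is $\mathcal{C}^l$ on $[T,\infty)$ and that $\tau_m^{(l)} \to \tau^{(l)}$ uniformly on bounded subintervals. The cases $l=0$ and $l=1$ are precisely the uniform convergence of $\tau_m$ and of $\tau_m'$ already secured in Section~\ref{sec3}, so they furnish the base. For the inductive step I fix $l \ge 2$, assume the statement for all orders $j < l$, and work with the iterative formula $\tau_m^{(l)}(t) = \OmSum_{c=1}^m F_l(t+c,\tau_{m-c}(t+c),\dots,\tau_{m-c}^{(l-1)}(t+c),x)\bullet x \big|_{x=0}$, whose inner functions involve by construction only the derivatives of order $\le l-1$ supplied by the hypothesis.

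First I would replace the inner functions by their limits. Writing $G_c(t,x) = F_l(t+c,\tau(t+c),\dots,\tau^{(l-1)}(t+c),x)$, the decay $F_l = \mathcal{O}(e^{-e^t})$ valid for $l>1$ yields the summability $\sum_{c=1}^\infty \|G_c(t,x)\|_{t\ge T,\,0\le x\le X} < \infty$. This is the exact hypothesis needed to rerun Lemma~\ref{lma2A}: the summability makes the tails $\OmSum_{c=n}^m G_c(t,x)\bullet x$ uniformly small once $n$ is large, so the composition stays inside the controlled region $0 \le x \le X$ and $\{\OmSum_{c=1}^m G_c(t,x)\bullet x\}_m$ is a normal (uniformly bounded) family. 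A telescoping estimate identical in shape to Theorem~\ref{thm2A} — noting that $\OmSum_{c=1}^{m+1} G_c\bullet x\big|_{x=0}$ is the length-$m$ composition evaluated at the tiny innermost argument $G_{m+1}(t,0) = \mathcal{O}(e^{-e^{t+m}})$, so a Taylor expansion controlled by the Cauchy derivative bounds governs the increment — then gives a limit $\psi_l(t) := \OmSum_{c=1}^\infty G_c(t,x)\bullet x\big|_{x=0}$, attained uniformly for $t \ge T$.

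The genuinely new point, and the step I expect to be the main obstacle, is that the composition defining $\tau_m^{(l)}$ is \emph{not} the composition of a fixed sequence: its $c$-th factor carries $\tau_{m-c}^{(j)}$ rather than $\tau^{(j)}$. To pass from $\tau_m^{(l)}$ to $\psi_l$ I would split $\|\tau_m^{(l)} - \psi_l\|$ at a truncation depth $N$ by the triangle inequality. The pieces of both compositions beyond depth $N$ are uniformly small, independently of $m$, by the normality estimate of the previous step; on the finitely many remaining factors $c \le N$ the inductive hypothesis gives $\tau_{m-c}^{(j)} \to \tau^{(j)}$ uniformly for each $j \le l-1$, and the local Lipschitz continuity of $F_l$ in its arguments turns this into convergence of each factor, which then propagates through a finite composition. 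Letting first $m \to \infty$ and then $N \to \infty$ forces $\tau_m^{(l)} \to \psi_l$ uniformly on bounded subintervals.

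It remains to identify $\psi_l$ as a genuine derivative and to extend off $[T,\infty)$. Uniform convergence of $\tau_m^{(l)}$ together with the convergence $\tau_m^{(l-1)} \to \tau^{(l-1)}$ supplied by the hypothesis lets me invoke the classical theorem on differentiating a uniformly convergent sequence, concluding that $\tau^{(l-1)}$ is differentiable with $\tau^{(l)} = \psi_l$; this closes the induction and shows $\tau \in \mathcal{C}^\infty$ on $[T,\infty)$. Finally I would descend with the tetration equation: since $\phi$ is entire and $\mathcal{F}(t) = \log \mathcal{F}(t+1)$ with $\log$ smooth wherever $\mathcal{F}(t+1) > 0$, smoothness of $\mathcal{F}(t) = \phi(t+\omega) + \tau(t+\omega)$ on an interval propagates one unit to the left, so by finite descent $\tau$ (equivalently $\mathcal{F}$) is infinitely differentiable on all of $(-2,\infty)$.
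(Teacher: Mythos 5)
Your proposal is correct in substance but takes a genuinely different route from the paper, and the comparison is instructive. The paper never forms your limit object $\psi_l$ at all: it proves directly that the consecutive differences are summable, $\sum_{m}\|\tau_{m+1}^{(l)}-\tau_{m}^{(l)}\|_{t\ge T}<\infty$, by bounding the partials $\|\partial F_l/\partial x_j\|\le\lambda_j<1$ on the controlled region, applying the mean value theorem in $l+1$ variables to get $\|\tau_{m+1}^{(l)}-\tau_m^{(l)}\|\le\sum_{j=0}^{l}\lambda_j\|\tau_m^{(j)}(t+1)-\tau_{m-1}^{(j)}(t+1)\|$, and then unrolling the $j=l$ term of this recursion down to $\tau_2^{(l)}-\tau_1^{(l)}$, so that summability follows from the inductive summability of the lower orders via a Cauchy-product-of-geometric-sequences argument. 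Uniform convergence of $\tau_m^{(l)}$ on $[T,\infty)$ is then immediate and the induction closes. You instead freeze the inner arguments at their limits, rerun the Section~\ref{sec2} machinery on the fixed sequence $G_c$, and recover $\lim_m \tau_m^{(l)}=\psi_l$ by a two-parameter limit exchange (truncate at depth $N$, let $m\to\infty$, then $N\to\infty$). What the paper's route buys: the quantities it carries in the induction are exactly the quantities its contraction step consumes, so the induction is self-propagating with no limit interchange. What your route buys: no geometric bookkeeping, an explicit invocation of the classical theorem identifying the limit of $\tau_m^{(l)}$ as $\tau^{(l)}$ (the paper leaves this implicit), and an explicit descent from $[T,\infty)$ to $(-2,\infty)$ via $\mathcal{F}(t)=\log\mathcal{F}(t+1)$, which the paper glosses over entirely.

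Two points need patching. First, your inductive hypothesis is too weak as stated: uniform convergence of $\tau_m^{(j)}\to\tau^{(j)}$ on bounded subintervals does not give the sup-norm control over all of $[T,\infty)$, uniformly in $m$, that your truncation step needs when it asserts the tails beyond depth $N$ are small ``independently of $m$.'' That assertion requires $\sum_{c>N}\|F_l(t+c,\tau_{m-c}(t+c),\dots,\tau_{m-c}^{(l-1)}(t+c),x)\|_{t\ge T}$ to be small uniformly in $m$, which in turn requires every $\tau_{m-c}^{(j)}$ to stay in the controlled region $|x_j-t^{(j)}|\le 1$ globally on $[T,\infty)$; this is precisely why the paper carries the uniform-in-$m$ decay bounds $\|\tau_m^{(j)}-\frac{d^j t}{dt^j}\|_{t\ge T}\le A_j e^{-e^t}$ as part of its induction hypothesis. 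Add these to your hypothesis (they propagate to order $l$ by the same estimates) and your argument goes through. Second, a smaller point: the increment estimate when lengthening the composition should be run with the mean value theorem and the bound $\|\partial F_l/\partial x_l\|\le\lambda_l<1$, as the paper does; the Cauchy-integral device of Theorem~\ref{thm2A} belongs to the entire function $\phi$ and a complex disk in $z$, and is not directly available in this real-variable setting without redoing the normality estimates for complex $x$.
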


\begin{proof}
Let's go by induction on the order of the derivative. Start by assuming that, for all $j<l$ that $\frac{d^j}{dt^j}\tau(t+\omega)$ is continuous on $(-2,\infty)$. And additionally, that there is some $A_j$ such that,

\[
||\frac{d^j\tau_m}{dt^j}- \frac{d^jt}{dt^j}||_{t\ge T} \le A_j e^{-e^t}\\
\]

And,

\[
\sum_{m=1}^\infty ||\tau_{m+1}^{(j)} - \tau_m^{(j)}||_{t\ge T} < \infty\\
\]

For all $T> \omega-2$. Choose $T$ large enough so that, $0\le |\tau^{(j)}_m(t) - t^{(j)}| \le 1$. Now, as before, we compare the composition to a sum,

\[
||\OmSum_{p=1}^m q_p(t,x)\bullet x|| \le \sum_{p=1}^m ||q_p(t,x)||
\]

Where the supremum norms are across different sets on either side of the inequality. Insofar,

\begin{eqnarray*}
&&\Big{|}\Big{|}\OmSum_{c=1}^m F_l(t+c,\tau_{m-c}(t+c),...,\tau_{m-c}^{(l-1)}(t+c),x)\bullet x \Big{|}\Big{|}_{t\ge T,1\ge x \ge 0} \le\\
&\le& \sum_{c=1}^m ||F_l(t+c,\tau_{m-c}(t+c),...,\tau_{m-c}^{(l-1)}(t+c),x)||_{t\ge T, X \ge x \ge 0} < \infty\\
\end{eqnarray*}

Since the right hand side is bounded for all $m$; we get the bound,

\begin{eqnarray*}
||\tau^{(l)}_m(t)||_{t\ge T} \le \Big{|}\Big{|}\OmSum_{c=1}^m F_l(t+c,\tau_{m-c}(t+c),...,\tau_{m-c}^{(l-1)}(t+c),x)\bullet x \Big{|}\Big{|}_{t\ge T,1\ge x \ge 0} &\le& M\\
\end{eqnarray*}

This tells us that our sequence $\tau_m^{(l)}$ is bounded and normal for $t \ge T$. We can strengthen this for large enough $t \ge T$ so that $M=1$, $||\tau^{(l)}_m(t)||_{t\ge T} \le 1$ for all $m$. We are going to use upper bounds from the mean-value theorem; but in $l+1$ variables; and in-order to do so we have to derive a bound on the partial derivatives of $F_l$. Recalling that $l>1$; $F_l(t) \to 0$ compactly; we have uniform decay to $0$ of the partial derivatives in $x_j$. So we are given the bounds, for large enough $T$,

\begin{eqnarray*}
\Big{|}\Big{|}\frac{\partial F_l}{\partial x_j}(t,x_0,...,x_l) \Big{|}\Big{|}_{t\ge T, 0 \le |x_0 - t|, |x_1 - 1|,x_2,..., x_l \le 1} &\le& \lambda_j < 1\\
\end{eqnarray*}

For some sequence of numbers $0 < \lambda_0, \lambda_1,...,\lambda_l< 1$. Provided $x_j$ and $x_j'$ are in the compact set $0 \le |x_j - t^{(j)}| \le 1$; we arrive at the bound,

\[
\Big{|}\Big{|}F_l(t,x_0,...,x_l) -F_l(t,x_0',...,x_l') \Big{|}\Big{|}_{t\ge T} \le \sum_{j=0}^l \lambda_j |x_j - x_j'|\\
\]

Since $0 \le || \tau_m^{(l)}|| \le 1$ sits happily as a normal family now; and we already know $0 \le ||\tau_m^{(j)} - t^{(j)}|| \le 1$, we can use the bound:

\begin{eqnarray*}
&&||\tau^{(l)}_{m+1}(t) - \tau^{(l)}_m(t)||_{t\ge T} \le\\ 
&\le&||F_l(t+1, \tau_m(t+1),...,\tau^{(l)}_{m}(t+1)) - F_l(t+1, \tau_{m-1}(t+1),...,\tau^{(l)}_{m-1}(t+1))||\\
&\le& \sum_{j=0}^{l} \lambda_j ||\tau_m^{(j)}(t+1) - \tau_{m-1}^{(j)}(t+1)||_{t \ge T}\\
\end{eqnarray*}

Which can be re-written, by continuing the iteration, remembering $\tau_1^{(l)} = \tau_0^{(l)} = 0$ for $l > 1$,

\begin{eqnarray*}
||\tau^{(l)}_{m+1}(t) - \tau^{(l)}_m(t)||_{t\ge T} &\le& \sum_{c=0}^{m-2} \sum_{j=0}^{l-1} \lambda_j \lambda_l^c||\tau_{m-c}^{(j)}(t+c+1) - \tau_{m-c-1}^{(j)}(t+c+1)||_{t \ge T}\\ 
&+& \lambda_l^{m-1}||\tau_2^{(l)}(t+m) - \tau_1^{(l)}(t+m)||_{t\ge T}\\
\end{eqnarray*}

Par quoi, this is summable by the induction hypothesis. To visualize, compare $||\tau_m^{(j)}(t) - \tau_{m-1}^{(j)}(t)||_{t\ge T} $ to $\mathcal{O}(q^m)$ for $0 < q < 1$--and break out a proof showing the Cauchy product of summable geometric sequences $a_m = \lambda_l^m$, $b_m = \mathcal{O}(q^m)$ is itself summable. And conjoin this with the fact $\sum_c ||\tau_2^{(l)}(t+c) - \tau_1^{(l)}(t+c)||_{t\ge T} < \infty$. Concluding the proof.

\end{proof}

This is a very brief proof, and the language was chosen rather abstractly. This language will help us immensely in what follows. We want to be able to recycle this proof for higher order hyper-operations. It's not exceptionally hard to construct a proof of $\mathcal{C}^\infty$ for $e \tet t$; but choosing a proof which generalizes well; allows us to iterate this procedure into higher order hyper-operators.  

\section{Constructing arbitrary auxiliary functions}\label{sec5}
\setcounter{equation}{0}

In this section we're going to start the inductive step of this paper. Everything up to this point, has been a base step of an inductive proof. Assume the construction,

\[
e \up^j t
\]

For $j < k$ such that the following conditions are satisfied,

\begin{enumerate}
\item $e \up^j 0 = 1$\\
\item If $j$ is even, $e \up^j t : (\alpha_j,\infty) \to \mathbb{R}$ bijectively for some $-j\le \alpha_k \le 1-j$\\
\item If $j$ is odd, $e \up^j t : \mathbb{R} \to (\alpha_j, \infty)$ bijectively\\
\item $e \up^j t$ is infinitely differentiable in $t$\\
\item $\frac{d}{dt}e \up^j t > 0$\\
\item $e\up^j t$ is a diffeomorphism\\
\item $e \up^{j-1} e \up^j t = e \up^j (t+1)$\\
\end{enumerate}

Assuming this for $j<k$, the rest of this paper is about constructing $e \up^k t$. The method we'll use is nearly identical to the beginning of this paper and the construction of $e \tet t$. We ask that the reader take care to notice the isomorphism between the construction of the function in the following theorem, and our construction of $\phi$. This new function will not be holomorphic--but otherwise we'll be given the same construction essentially, but we'll have to work a bit harder to get differentiability.

So to begin, we want to construct an auxiliary function $\Phi(t)$ such that,

\[
\Phi(t+1) = e^t \cdot e \up^{k-1} \Phi(t)\\
\]

\begin{theorem}
The function,

\[
\Phi(t) = \OmSum_{j=1}^\infty e^{t-j} e \up^{k-1} x \bullet x \Big{|}_{x=0}\\
\]

Converges to an infinitely differentiable function $\mathbb{R} \to \mathbb{R}$ such that,

\[
\Phi(t+1) = e^t \cdot e \up^{k-1} \Phi(t)\\
\]
\end{theorem}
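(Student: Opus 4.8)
The plan is to run the arguments of Section~\ref{sec2} and Section~\ref{sec4} in parallel, replacing the inner map $e^{s-j+z}$ by $h_j(t,x) = e^{t-j}\,e\up^{k-1}x$ and replacing every appeal to holomorphy by a real-variable estimate. First I would record the algebraic skeleton. Writing $\Phi_n(t) = \OmSum_{j=1}^n h_j(t,x)\bullet x\big{|}_{x=0}$ and peeling off the outermost term, the reindexing $h_{i+1}(t,x) = h_i(t-1,x)$ shows the inner tail is a shifted copy of the composition, so that $\Phi_n(t) = e^{t-1}\,e\up^{k-1}\Phi_{n-1}(t-1)$, i.e. $\Phi_n(t+1) = e^t\,e\up^{k-1}\Phi_{n-1}(t)$. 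Thus the functional equation holds by design at every finite stage and will pass to the limit once uniform convergence and continuity of $e\up^{k-1}$ (guaranteed by condition 4 of the induction hypothesis) are in hand. The summability criterion also survives: on a compact interval $\mathcal{P}$ and a bounded set $\mathcal{K}$ inside the domain of $e\up^{k-1}$, one has $\|h_j\|_{\mathcal{P},\mathcal{K}}\le e^{\sup\mathcal{P} - j}\,\sup_{x\in\mathcal{K}}|e\up^{k-1}x|$, whose sum over $j$ is a convergent geometric series.

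With this I would reprove the normality Lemma~\ref{lma2A} verbatim in the new setting. Take $\mathcal{K} = \{|x|\le\delta\}$ with $\delta\le 1$ chosen so that $\mathcal{K}\subset(\alpha_{k-1},\infty)$; this is possible because $e\up^{k-1}$ is defined on all of $\mathbb{R}$ when $k$ is even, and because $\alpha_{k-1}<0$ when $k$ is odd. Choosing $\epsilon<\delta$, the induction on the compositional depth $m-n$ goes through because whenever the inner tail $\phi_{(n+1)m}$ has modulus below $\epsilon$ it lands in $\mathcal{K}$, whence $\|h_n(t,\phi_{(n+1)m})\|\le\|h_n\|_{\mathcal{P},\mathcal{K}}<\epsilon$. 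This yields that $\Phi_n$ is a uniformly bounded, normal family on compacts.

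Convergence is the first place where the argument genuinely departs from Theorem~\ref{thm2A}, since Cauchy's integral theorem is no longer available to bound all derivatives of $\Phi_m$ in one stroke. Here I would use instead that $e\up^{k-1}$ is $\mathcal{C}^\infty$ with derivatives bounded on the relevant compact set, and expand $\Phi_{m+1}-\Phi_m$ by the mean value theorem (or Taylor with Lagrange remainder) rather than by a power series. The perturbation between consecutive depths is driven by the extra innermost factor $h_{m+1}=\mathcal{O}(e^{-m})$, propagated outward through maps of bounded derivative, giving $\|\Phi_{m+1}-\Phi_m\|_{\mathcal{P}}\le A e^{-m}$; the telescoping series then converges uniformly on compacts, producing a continuous $\Phi:\mathbb{R}\to\mathbb{R}$ satisfying the functional equation.

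Smoothness is proved exactly as in the theorem of Section~\ref{sec4}, by induction on the order $l$ of the derivative. Differentiating $\Phi_n(t+1)=e^t\,e\up^{k-1}\Phi_{n-1}(t)$ produces, via Fa\`a di Bruno, a generating family $F_l$ of functions of $(t,x_0,\dots,x_l)$ built from the derivatives of $e\up^{k-1}$ (finite by condition 4 for $j<k$), and $\Phi_n^{(l)}$ is recovered as an infinite composition $\OmSum_{c} F_l(\cdots,x)\bullet x$ in the lower-order derivatives $\Phi_{n-c}^{(j)}$, $j<l$. I would then repeat the three moves of that section: a summability estimate for the $F_l$, the consequent normality of the sequence $\Phi_n^{(l)}$, and a mean-value bound in $l+1$ variables with contraction constants $0<\lambda_j<1$ expressing $\|\Phi_{m+1}^{(l)}-\Phi_m^{(l)}\|$ as a Cauchy product of summable geometric and lower-order sequences, hence summable. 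The main obstacle is thus not any single estimate but the bookkeeping forced by abandoning holomorphy: every derivative bound that Section~\ref{sec2} obtained for free from Cauchy's formula must now be extracted from the induction hypothesis that $e\up^{k-1}\in\mathcal{C}^\infty$ with controlled growth, and one must check throughout that the compositional arguments never leave the domain $(\alpha_{k-1},\infty)$.
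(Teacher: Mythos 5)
Your proposal is correct and follows essentially the same route as the paper's own proof: the normality lemma redone by induction on compositional depth, convergence via a bounded $x$-derivative of the partial compositions plus the mean value theorem applied to the innermost perturbation $h_{m+1}(t,0)=e^{t-m-1}$, and smoothness by the $F_l$-recursion and infinite-composition iteration of Section~\ref{sec4}. The paper's proof makes the same substitution of Cauchy estimates by the chain-rule/mean-value bound that you describe, so there is no substantive difference to report.
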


\begin{proof}
To begin, we'll gather our normality theorems. For all compact intervals $\mathcal{I} \subset \mathbb{R}$, there exists $N$ such for $n,m > N$; The term,

\[
\Big{|}\Big{|}\OmSum_{j=n}^m e^{t-j} e \up^{k-1} x\Big{|}\Big{|}_{t \in \mathcal{I},0\le x \le 1} < \epsilon\\
\]

Set $h_j(t,x) = e^{t-j}e \up^{k-1} x$ and set $||h_j (t, x)||_{t\in\mathcal{I}, 0\le x \le 1} = \rho_j$. Pick $1>\epsilon > 0$, and choose $N$ large enough so when $n > N$,

\[
\rho_n < \epsilon
\]

Denote: $\Phi_{nm}(t, x) =\OmSum_{j=n}^m h_j (t, x) \bullet x = h_n(t, h_{n+1}(t, ...h_m(t, x)))$. We go by induction on the difference $m-n = k$; which counts how many compositions appear. When $k=0$ then,

\[
||\Phi_{nn}(t,x)||_{\mathcal{I},0\le x \le 1} = ||h_n(t,x)||_{\mathcal{I},0\le x \le 1} = \rho_n < \epsilon
\]

Assume the result holds for $m-n < k$, we show it holds for $m-n = k$. Observe,

\begin{eqnarray*}
||\Phi_{nm}(t,x)||_{\mathcal{I}, 0\le x \le 1} &=& ||h_n(t,\Phi_{(n+1)m}(t,x))||_{\mathcal{I}, 0\le x \le 1}\\
&\le& ||h_n(t,x)||_{\mathcal{I}, 0\le x \le 1}\\
&=& \rho_n < \epsilon\\
\end{eqnarray*}

Which follows by the induction hypothesis because $|\Phi_{(n+1)m}(t,x)| < \epsilon < 1$; i.e: $m - n - 1 < k$. And from this,

\[
||\OmSum_{j=1}^m e^{t-j} e \up^{k-1} x \bullet x ||_{\mathcal{I}, 0\le x \le 1} < M\\
\]

For some $M$. And equally so,

\begin{eqnarray*}
||\frac{d}{dx}\OmSum_{j=1}^m e^{t-j} e \up^{k-1} x \bullet x||_{\mathcal{I}, 0\le x \le 1} &=& \prod_{j=1}^m \big{|}\big{|}e^{t-j}e\up^{k-1} \OmSum_{c=j+1}^m e^{t-c} e \up^{k-1} x \bullet x\big{|}\big{|}_{\mathcal{I}, 0\le x \le 1}\\
&\to& 0\,\,\text{as}\,\,m\to\infty\\
&<& M\\
\end{eqnarray*}

If we choose $M$ large enough. Now, applying the mean value theorem,

\begin{eqnarray*}
||\Phi_{m+1}(t,0) - \Phi_m(t,0)||_{t\in\mathcal{I}} &=& \Big{|}\Big{|}\Phi_m(t,e^{t-m-1}) - \Phi_m(t,0)\Big{|}\Big{|}_{t\in\mathcal{I}}\\
&\le& M e^{t-m-1}\\
\end{eqnarray*}

Which concludes the proof of convergence. To derive infinite differentiability, we go by induction for $j<l$. We can superficially show that,

\[
\Phi^{(l)}(t) = F(t-1,\Phi(t-1),\Phi'(t-1),...,\Phi^{(l)}(t-1))\\
\]

And, if we start with this relationship, and iterate it:

\[
\Phi^{(l)}(t) = \OmSum_{j=1}^\infty F(t-j,\Phi(t-j),\Phi'(t-j),...,\Phi^{(l-1)}(t-j), x) \bullet x \Big{|}_{x=0}\\
\]

Which is simply continuing the iteration towards negative infinity. Notice this has no mention of the $l$'th derivative in this formula, forcing our function to be differentiable $l$ times if it's differentiable for $j<l$--given this infinite composition converges. We leave it to the reader to check; we've done these proofs three times already. It suffices to show,

\[
\sum_{j=1}^\infty ||F(t-j,\Phi(t-j),\Phi'(t-j),..., \Phi^{(l-1)}(t-j), x)||_{t \in \mathcal{I}, X \ge x \ge 0} < \infty\\
\]
\end{proof}

\section{Getting a continuous hyper-operator}\label{sec6}
\setcounter{equation}{0}

We are going to perform the iteration we performed for tetration. We're going to iterate a bunch of $\log$'s--but this time they aren't really $\log$'s. We have a sequence of functions $0 \le j < k$,

\[
\mathcal{A}_{j}(t) = \big{(}e \up^j t\big{)}^{-1}\\
\]

Where this is functional inversion, and not multiplicative inversion. Each of these $\mathcal{A}_j(t)$ grow slower than the last, and each grows slower than $\log(t)$. They grow much, much, slower; we won't need this though, $\log$ suffices. Let's write,

\[
\Phi(t) + \Lambda_n(t) = \mathcal{A}_{k-1} \mathcal{A}_{k-1} \cdots(n\,\text{times})\cdots \mathcal{A}_{k-1}(\Phi(t+n))\\
\]

Which satisfies,

\[
e \up^{k-1} \Phi(t) + \Lambda_n(t) = \Phi(t+1) + \Lambda_{n-1}(t+1)\\
\]

Starting from the iteration $\Lambda_0(t) = 0$. Each of these $\Lambda_n$ are greater than zero because,

\begin{eqnarray*}
\Phi(t+n) &=& e^{t+n}e \up^{k-1} e^{t+n-1}e \up^{k-1} \cdots (n\,\text{times}) \cdots e^t e \up^{k-1} \Phi(t)\\
&>& e \up^{k-1} e \up^{k-1} \cdots (n\,\text{times}) \cdots e \up^{k-1} \Phi(t)\\
\mathcal{A}_{k-1}^{\circ n} \Phi(t+n) &>& \Phi(t)\\
\end{eqnarray*}

for $t > 0$. Which tells us that $\Lambda$ will not converge trivially to a constant if it does converge. Now, the first thing is that, since $e \up^{k-1} t$ grows faster than any iterated exponential, it's inverse grows super slow, and its derivative tends to zero at least like $1/t$. Therefore, if $t_1 > t_0 > T$,

\[
|\mathcal{A}_{k-1}(t_1) - \mathcal{A}_{k-1}(t_0)| \le \frac{1}{t_0}|t_1 - t_0|
\]

And we are essentially done, because,

\begin{eqnarray*}
|\Lambda_{m+1}(t) - \Lambda_{m}(t)| &=& |\mathcal{A}_{k-1}(\Phi(t+1) + \Lambda_{m}(t+1)) - \mathcal{A}_{k-1}(\Phi(t+1) + \Lambda_{m-1}(t+1))|\\
&\le&\Phi(t+1)^{-1}|\Lambda_{m}(t+1) - \Lambda_{m-1}(t+1)|\\
&\le& \big{(}\prod_{j=1}^m  \Phi(t+j)^{-1}\big{)} | \mathcal{A}_{k-1}(\Phi(t+m+1)) - \Phi(t+m)|\\
&\le& \big{(}\prod_{j=1}^m \Phi(t+j)^{-1}\big{)} |t+m|\\
\end{eqnarray*}

Because,

\[
\mathcal{A}_{k-1}(\Phi(t+1)) = \mathcal{A}_{k-1}(e^t e \up^{k-1}\Phi(t)) \le \mathcal{A}_{k-1}(e \up^{k-1}\Phi(t) + t) = \Phi(t) + t\\
\]

This is enough to derive continuity of the limit for $t > T$, taking iterated $\mathcal{A}_{k-1}$ takes us to our maximum domain so long as the derivatives are greater than $0$ (we will show this in the next section). Then we shift by an $\omega_k$ so that $e \up^k 0 = \Phi(\omega_k) + \Lambda(\omega_k) = 1$. This will tell us that if $\mathcal{A}_{k-1}$ is defined on $\mathbb{R}$ that $e \up^k t$ is defined on $\mathbb{R}$. If $\mathcal{A}_{k-1}$ is defined on $(-\alpha_{k-1},\infty)$ then $e\up^k t$ is defined on $(-\alpha_k,\infty)$. This will be more thoroughly explained in the appendix where we run over the behaviour of these hyper-operators more so. If the reader is confused by this I suggest skipping briefly to Appendix \ref{appA}.

\section{Showing infinite differentiability generally}\label{sec7}
\setcounter{equation}{0}

Now we're going to rinse and repeat the last time we had to show infinite differentiability. Proving $e \up^k t$ is $\mathcal{C}^{\infty}$ is very similar to showing $e \tet t$ was $\mathcal{C}^\infty$. It's very much the same proof; but it requires some subject changes. First of all,

\[
\sum_{m=1}^\infty ||\Lambda_{m+1}(t) - \Lambda_m(t)||_{t \ge T} \le \infty
\]

Second of all, $\Lambda$ is the solution to the equation,

\[
\Lambda(t) = \mathcal{A}_{k-1}(\Phi(t+1) + \Lambda(t+1)) - \Phi(t)\\
\]

We're going to talk about the sequence of terms $\Lambda_m$; which are infinitely differentiable, and approach $\Lambda$ in a uniform manner for $[T,\infty)$. For each derivative $l$, there's a function $F_l(t,x_0,x_1,..,x_l)$ such that,

\[
\Lambda_m^{(l)}(t) = F_l(t+1,\Lambda_{m-1}(t+1),\Lambda_{m-1}'(t+1),...,\Lambda_{m-1}^{(l)}(t+1))\\
\]

And $F_l$ is generated by the same recursion as before,

\begin{eqnarray*}
F_0(t,x_0) &=& \mathcal{A}_{k-1}(\Phi(t) + x_0) - \Phi(t-1)\\
F_{l+1}(t,x_0,...,x_{l+1}) &=& \frac{\partial F_{l}}{\partial t} + \sum_{d=0}^{l} x_{d+1}\frac{\partial F_l}{\partial x_d}\\
\end{eqnarray*}

Where by induction we can show,

\[
\frac{\partial F_l}{\partial x_d} \to 0\,\,\text{as}\,\,t\to\infty\\
\]

\[
\frac{F_l(t,\mathbf{x})}{F_l(t,\mathbf{x}')} \to 1\,\,\text{as}\,\,t\to\infty\\
\]

So that, our functions $\Lambda^{(m)}(t)$ satisfy the formula,

\begin{eqnarray*}
\Lambda_{m+1}^{(l)}(t) - \Lambda_{m}^{(l)}(t) &=& F_l(t+1,\Lambda_{m}(t+1),\Lambda_{m}'(t+1),...,\Lambda_{m}^{(l)}(t+1))\\ &-&F_l(t+1,\Lambda_{m-1}(t+1),\Lambda_{m-1}'(t+1),...,\Lambda_{m-1}^{(l)}(t+1))\\
\end{eqnarray*}

We are going to go by induction, so we'll assume that $\Lambda_m^{(j)}(t)$ converges for $j<l$. Then really, what we want, is that the above expression is summable because $\Lambda_m$ is normal as $t$ grows; and then the mean-value theorem takes care of the rest.

The function $\Lambda(t)$ looks a lot like $\Lambda_m$ for large $m$. If this is summable convergence, than proving that $\Lambda_{m}^{(l)}(t)$ converges in the same manner reduces to a conversation about $F_l(t,x_0,x_1,...,x_l)$. The result to be shown is,

\[
\sum_{m=1}^\infty ||\Lambda_{m+1}^{(l)}(t) - \Lambda_m^{(l)}(t)||_{t \ge T}  < \infty\\
\]

In summation,

\begin{eqnarray*}
\sum_{m=1}^\infty ||\Lambda_{m+1}^{(j)}(t) - \Lambda_m^{(j)}(t)||_{t \ge T}  < \infty \,\,\text{for}\,\,0 \le j< l\,\,&\Rightarrow&\\
\sum_{m=1}^\infty ||\Lambda_{m+1}^{(l)}(t) - \Lambda_m^{(l)}(t)||_{t \ge T}  < \infty\\
\end{eqnarray*}

Now we could almost directly copy the method we used to prove $\tau$ is infinitely differentiable to prove $\Lambda$ is infinitely differentiable. The trouble is something that really throws a stick in our gears. The function $\tau(t)$ looks like $t$, and so did its derivatives, causing either vanishing or approaching $1$. The function $\Lambda$ satisfies a different form,

\begin{eqnarray*}
|\Lambda_m(t) - \Lambda_1(t)| &\le& \sum_{c=1}^m \frac{t+c}{\prod_{j=1}^c \Phi(t+j)}\\
|\Lambda(t) - \Lambda_1(t)| &\le& \sum_{m=1}^\infty |\Lambda_{m+1} - \Lambda_m|\\
&\le& \sum_{m=1}^\infty \frac{t+m}{\prod_{j=1}^m \Phi(t+j)}\\
\end{eqnarray*}

Which means, $\Lambda$ looks like $\Lambda_1$ for large $t$. And the derivatives of $\Lambda$ look like the derivatives of $\Lambda_1$ for large $t$. So we can almost run the same procedure we ran before, but it's going to be inherently more difficult.

A couple of things to remember is that these functions $\mathcal{A}_{k-1}$ are sub-logarithmic (they grow far slower than the logarithm). So we can compare $\Phi$ to $\phi$ from before. \c{C}a c'est,

\[
\mathcal{A}_{k-1}(a\cdot b) \le \mathcal{A}_{k-1}(a) + \mathcal{A}_{k-1}(b)\\
\]

Of which,

\[
\Lambda(t) \le \mathcal{A}_{k-1}(\Phi(t+1)) - \Phi(t) + \mathcal{A}_{k-1}(1+\frac{\Lambda(t+1)}{\Phi(t+1)})\\
\]

Now $\Phi(t+1) = e^t e \up^{k-1} \Phi(t) \le e \up^{k-1} \Phi(t) + t$, which gives,

\[
\Lambda(t) \le t + \mathcal{A}_{k-1}(1+\frac{\Lambda(t+1)}{\Phi(t+1)})
\]

This looks like the original case. The difference is, if we want to exploit everything to the full potential; it's better if we write,

\[
\lim_{t\to\infty} \frac{\Lambda(t)}{\Lambda_1(t)} = 1\\
\]

And by a L'H\^{o}pital/Bernoulli argument--it works for higher order derivatives.

\[
\lim_{t\to\infty} \frac{\Lambda^{(l)}(t)}{\Lambda_1^{(l)}(t)} = 1\\
\]

Now, the take away is similar as before. The iteration we've defined is,

\[
\Lambda_m^{(l)}(t) = \OmSum_{c=1}^m F_l(t+c,\Lambda_{m-c}(t+c),...,\Lambda^{(l-1)}_{m-c}(t+c),x)\bullet x \Big{|}_{x=0}\\
\]

But we want to look at each $\Lambda_m^{(l)}$ like $\Lambda_1^{(l)} + \mathcal{O}(\frac{1}{e^t})$ for large $t$. Where the error still drops off well enough so that we can continue with the proof that we used on $\tau$. The trouble being, we'll have to account for another function somewhere in the mix.

Our goal then; show this iteration converges. The procedure will be similar as to how it was done with $\tau$, but it will require some finesse on our part.

\begin{theorem}
The function $\Lambda$ is infinitely differentiable.
\end{theorem}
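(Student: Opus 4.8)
The plan is to run a strong induction on the order $l$ of the derivative, paralleling the proof that $\tau$ is $\mathcal{C}^\infty$ verbatim, but with the trivial comparison function $t$ replaced throughout by $\Lambda_1$. As the induction hypothesis I would assume that for every $j < l$ the derivative $\Lambda^{(j)}$ exists and is continuous, that the telescoping sums $\sum_{m} ||\Lambda_{m+1}^{(j)} - \Lambda_m^{(j)}||_{t \ge T} < \infty$, and crucially that $\Lambda_m^{(j)}(t) - \Lambda_1^{(j)}(t) \to 0$ uniformly as $t \to \infty$, so that for $T$ large the family $\Lambda_m^{(j)}$ lives in a fixed compact tube $0 \le |x_j - \Lambda_1^{(j)}(t)| \le 1$ about the graph of $\Lambda_1^{(j)}$. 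This re-centering is the one genuine departure from the $\tau$ argument: there the tube was anchored at the constants $t^{(j)}$, whereas here each slot is pinned to the nonconstant derivative $\Lambda_1^{(j)}(t)$.

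First I would establish normality of $\Lambda_m^{(l)}$. Writing the iterate as the infinite composition
\[
\Lambda_m^{(l)}(t) = \OmSum_{c=1}^m F_l(t+c,\Lambda_{m-c}(t+c),\dots,\Lambda^{(l-1)}_{m-c}(t+c),x)\bullet x \Big{|}_{x=0},
\]
I would invoke the summability criterion $\sum_{c} ||F_l(t+c,\dots,x)||_{t \ge T,\, X \ge x \ge 0} < \infty$, which holds because $F_l(t,\mathbf{x}) \to 0$ compactly for $l > 1$, and then bound the composition by the sum exactly as in Lemma \ref{lma2A}. This yields a uniform bound $||\Lambda_m^{(l)}||_{t \ge T} \le M$, and by enlarging $T$ we may take $M = 1$, so the sequence sits inside the tube needed to close the induction.

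Next I would linearize. Since $\frac{\partial F_l}{\partial x_d} \to 0$ as $t \to \infty$ for $l > 1$, there is a large $T$ and constants $0 < \lambda_0,\dots,\lambda_l < 1$ with $||\frac{\partial F_l}{\partial x_j}||_{t \ge T} \le \lambda_j$ on the tube about $(\Lambda_1,\Lambda_1',\dots,\Lambda_1^{(l)})$. The multivariable mean-value theorem then gives
\[
||\Lambda^{(l)}_{m+1} - \Lambda^{(l)}_m||_{t\ge T} \le \sum_{j=0}^{l} \lambda_j\, ||\Lambda_m^{(j)}(t+1) - \Lambda_{m-1}^{(j)}(t+1)||_{t \ge T}.
\]
Unrolling this recursion in $m$, and using $\Lambda_1^{(l)} = \Lambda_0^{(l)} = 0$ for the base terms, splits the bound into a sum over $j < l$ of products $\lambda_l^c\,||\Lambda_{m-c}^{(j)} - \Lambda_{m-c-1}^{(j)}||$ plus a tail $\lambda_l^{m-1}||\Lambda_2^{(l)}(t+m) - \Lambda_1^{(l)}(t+m)||$. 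By the induction hypothesis each $||\Lambda_{m}^{(j)} - \Lambda_{m-1}^{(j)}|| = \mathcal{O}(q^m)$ for some $0 < q < 1$, so each inner piece is a Cauchy product of two summable geometric sequences and hence summable; the tail is summable because $\mathcal{A}_{k-1}$ is sub-logarithmic while $\Phi$ grows super-exponentially, forcing $\sum_c ||\Lambda_2^{(l)}(t+c) - \Lambda_1^{(l)}(t+c)|| < \infty$. Summability of $\sum_m ||\Lambda_{m+1}^{(l)} - \Lambda_m^{(l)}||$ then delivers uniform convergence of $\Lambda_m^{(l)}$, so $\Lambda^{(l)}$ exists and is continuous, completing the step.

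The main obstacle I expect is controlling everything relative to the moving center $\Lambda_1^{(j)}$ rather than the fixed values $\{t,1,0,\dots\}$: concretely, I must verify that the partial-derivative bounds $\lambda_j < 1$ and the normality tube can be arranged simultaneously on a neighborhood of the graph of each $\Lambda_1^{(j)}$, which requires the uniform estimates $\lim_{t\to\infty}\Lambda^{(l)}(t)/\Lambda_1^{(l)}(t) = 1$ obtained via the L'H\^{o}pital/Bernoulli comparison. Once that re-centering is secured, the Cauchy-product bookkeeping is identical to the $\tau$ case.
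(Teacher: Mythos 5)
Your proposal follows the paper's own proof step for step: strong induction on $l$, the same two induction hypotheses (summability of the telescoping differences $\sum_m \|\Lambda_{m+1}^{(j)} - \Lambda_m^{(j)}\|$ and confinement to a tube about $\Lambda_1^{(j)}$), normality of the infinite composition, contraction constants $\lambda_j < 1$ coming from $\partial F_l/\partial x_j \to 0$, the multivariable mean-value bound, and the unrolled recursion summed as a Cauchy product of geometric sequences plus the tail $\lambda_l^{m-1}\|\Lambda_2^{(l)}(t+m) - \Lambda_1^{(l)}(t+m)\|$. You also correctly name the one genuine novelty relative to the $\tau$ argument: everything must be centered on the moving functions $\Lambda_1^{(j)}$ rather than on the constants $t^{(j)}$.

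However, your normality step contradicts that very point, and as written it is a gap. You justify the summability criterion $\sum_c \|F_l(t+c,\dots,x)\|_{t\ge T,\, X\ge x\ge 0} < \infty$ by asserting that $F_l(t,\mathbf{x}) \to 0$ compactly for $l>1$, and you conclude $\|\Lambda_m^{(l)}\|_{t\ge T} \le M$ with $M=1$. That assertion is a property of the $\tau$ construction (where $F_l = \mathcal{O}(e^{-e^t})$ because $\tau_m$ and its derivatives track $t, 1, 0, \dots$); it is precisely what the paper flags as \emph{failing} here. In the $\Lambda$ setting the paper claims only $\partial F_l/\partial x_d \to 0$ and $F_l(t,\mathbf{x})/F_l(t,\mathbf{x}') \to 1$, never $F_l \to 0$, because $\Lambda_m^{(l)}$ tracks $\Lambda_1^{(l)}$, which is not known to vanish; for the same reason your side claim $\Lambda_1^{(l)} = 0$ is false (only $\Lambda_0 = 0$ identically). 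The statements actually available, and the ones the paper uses, are the centered ones: $\sum_c \|F_l(t+c,\dots) - \Lambda_1^{(l)}(t+c)\|_{t\ge T} < \infty$, equivalently $\sum_c \|\Lambda_m^{(l)}(t+c) - \Lambda_1^{(l)}(t+c)\|_{t\ge T} < \infty$, yielding normality in the form $\|\Lambda_m^{(l)} - \Lambda_1^{(l)}\|_{t\ge T} \le 1$. Only this centered bound places $\Lambda_m^{(l)}$ in the tube $0 \le |x_l - \Lambda_1^{(l)}(t)| \le 1$ on which your contraction constants $\lambda_j$ are valid; the un-centered bound $\|\Lambda_m^{(l)}\| \le 1$ does not, unless you separately prove $\Lambda_1^{(l)}$ is uniformly small, which neither you nor the paper establishes. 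Re-centering this one step -- which your own induction hypothesis and closing paragraph already anticipate -- repairs the argument and makes it coincide with the paper's proof.
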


\begin{proof}
We will assume that $\Lambda$ is $j$ times differentiable for $j < l$; and that, for $T>0$ large enough,

\[
\sum_{m=1}^\infty ||\Lambda_{m+1}^{(j)}(t) - \Lambda_m^{(j)}(t)||_{t\ge T} < \infty\\
\]

And the second fact we'll use is,

\[
\sum_{c=1}^\infty ||F_l(t+c,\Lambda_{m-c}(t+c), \Lambda'_{m-c}(t+c),...,\Lambda_{m-c}^{(l)}(t+c)) - \Lambda^{(l)}_1(t+c)||_{t \ge T}<\infty \\
\]

Which is equivalent to,

\[
\sum_{c=1}^\infty ||\Lambda_m^{(l)}(t+c) - \Lambda^{(l)}_1(t+c)||_{t \ge T}<\infty \\
\]

Which follows for all $m$. It's routine (at this point in the paper) to show this is a normal sequence. This is essentially to say,

\[
||\OmSum_{c=1}^m F_l(t+c,\Lambda_{m-c}(t+c),...,\Lambda^{(l-1)}_{m-c}(t+c),x)\bullet x - \Lambda_1^{(l)}(t)||_{t \ge T, 0 \le |x-\Lambda_1^{(l)}(t+m)| \le 1} < M\\
\]

We can strengthen this to $||\Lambda_m^{(l)}(t) - \Lambda_1^{(l)}(t)||_{t\ge T} \le 1$, so long as we choose $t \ge T$ large enough. And, now we can almost continue like in the proof of $\tau$. There are numbers $0 < \lambda_0,...,\lambda_l < 1$ such for large enough $T$,

\[
||\frac{\partial F_l}{\partial x_j}(t,x_1,...,x_{j-1},x_j)||_{t \ge T, 0 \le |x_j-\Lambda_1^{(j)}(t)| \le 1} \le \lambda_j\\
\]

Now,

\begin{eqnarray*}
||\Lambda_{m+1}^{(l)}(t) - \Lambda_m^{(l)}(t)||_{t\ge T} &=& ||F_{l}(t+1, \Lambda_m(t+1),...,\Lambda^{(l-1)}_{m}(t+1),\Lambda_m^{(l)}(t+1))-\\
&-& F_{l}(t+1, \Lambda_{m-1}(t+1),...,\Lambda^{(l-1)}_{m-1}(t+1),\Lambda_{m-1}^{(l)}(t+1))||\\
&\le& \sum_{j=0}^l \lambda_j||\Lambda^{(j)}_m(t+1) - \Lambda^{(j)}_{m-1}(t+1)||_{t\ge T}\\
\end{eqnarray*}

Which when iterated derives as,

\begin{eqnarray*}
||\Lambda_{m+1}^{(l)}(t) - \Lambda_m^{(l)}(t)||_{t\ge T} &\le& \sum_{c=0}^{m-2}\sum_{j=0}^{l-1} \lambda_j \lambda_l^{c}||\Lambda^{(j)}_{m-c}(t+c+1) - \Lambda^{(j)}_{m-c-1}(t+c+1)||_{t\ge T}\\
&+& \lambda_l^{m-1}||\Lambda_2^{(l)}(t+m) - \Lambda_1^{(l)}(t+m)||_{t\ge T}\\ 
\end{eqnarray*}

The above expression is summable by the induction hypothesis. Completing the proof.
\end{proof}

The last thing we need to complete our construction of $e \up^k t$ is,

\[
\frac{d}{dt}e \up^k t > 0\\
\]

Now for large $t > T$ this is true. This is because $\Lambda'(t)  \to \frac{d}{dt}\mathcal{A}_{k-1} \Phi(t+1) - \Phi'(t)$. And $\frac{d}{dt}\mathcal{A}_{k-1} \Phi(t+1) > 0$ and $\Phi'(t) > 0$. So for large enough $t > T$, we know $\Phi'(t) + \Lambda'(t) > 0$. The rest is by infinite descent. Insofar as,

\[
0 < \frac{d}{dt} e \up^k t = \frac{d}{dt} e \up^{k-1} e \up^k (t-1) = \big{(}\frac{d}{dx} e \up^{k-1} x \big{)} \frac{d}{dt} e \up^k (t-1)\\
\]

Which, implies that $\frac{d}{dt} e \up^k (t-1) > 0$ because $\big{(}\frac{d}{dx} e \up^{k-1} x \big{)} > 0$ everywhere. Therefore by infinite descent, the result.

\section{The Amalgamation}

We state our very dense theorem here. It is a procedural mechanism to produce $\mathcal{C}^\infty$ hyper-operations. We state it in a recursive format, to knock home the recursive nature.

\begin{theorem}
Define $e \up^1 t = e^t$, there are a sequence of numbers $\omega_k$ such that the following recursion,

\[
\Phi_k(t) = \OmSum_{j=1}^\infty e^{t-j}e \up^k x \bullet x \Big{|}_{x=0}\\
\]

And,

\[
e \up^{k+1} t = \lim_{n\to\infty} \mathcal{A}_{k} \mathcal{A}_k \cdots (n\,\text{times})\cdots \mathcal{A}_k \Phi_k(t+n+\omega_k)\\
\]

Where $\mathcal{A}_k = \big{(}e \up^k t\big{)}^{-1}$, satisfies the following properties:

\begin{enumerate}
\item $e \up^k 0 = 1$\\
\item If $k$ is even, $e \up^k t : (\alpha_k,\infty) \to \mathbb{R}$ bijectively for some $-k\le \alpha_k \le 1-k$\\
\item If $k$ is odd, $e \up^k t : \mathbb{R} \to (\alpha_k, \infty)$ bijectively\\
\item $e \up^k t$ is infinitely differentiable in $t$\\
\item $\frac{d}{dt}e \up^k t > 0$\\
\item $e\up^k t$ is a diffeomorphism\\
\item $e \up^{k-1} e \up^k t = e \up^k (t+1)$\\
\end{enumerate}
\end{theorem}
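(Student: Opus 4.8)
The plan is to prove the theorem by induction on $k$, regarding the seven properties as a single package $P(k)$ and showing $P(k)\Rightarrow P(k+1)$. The base case $P(1)$ is the exponential $e\up^1 t = e^t$, for which every property is immediate: $e^0=1$; $e^t:\mathbb{R}\to(0,\infty)$ is a $\mathcal{C}^\infty$ strictly increasing bijection, which is the odd-index form $\mathbb{R}\to(\alpha_1,\infty)$ with $\alpha_1=0$; it is a diffeomorphism; and $e\up^0 e\up^1 t = e\cdot e^t = e^{t+1}=e\up^1(t+1)$. The inductive step, producing $e\up^{k+1}$ from $e\up^k$, is precisely the content of Sections \ref{sec5}--\ref{sec7} read with the index shifted up by one; it is worth noting that at $k=1$ the auxiliary construction $\Phi_1(t)=\OmSum_{j=1}^\infty e^{t-j}e\up^1 x\bullet x|_{x=0}$ collapses to $\phi$ of Section \ref{sec2}, so the general machine already subsumes the tetration case and no separate construction is needed to keep the induction running.

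Granting $P(k)$, I would first invoke the auxiliary-function theorem of Section \ref{sec5} with $e\up^k$ in the role of $e\up^{k-1}$: since $e\up^k$ is $\mathcal{C}^\infty$ with positive derivative, the summability and normality estimates apply verbatim and produce an infinitely differentiable $\Phi_k$ with $\Phi_k(t+1)=e^t\cdot e\up^k\Phi_k(t)$. Next I would run Section \ref{sec6}: with $\mathcal{A}_k=(e\up^k)^{-1}$, the iterates $\mathcal{A}_k^{\circ n}\Phi_k(\cdot+n)$ converge to a continuous limit, the contraction being controlled by $\prod_j\Phi_k(t+j)^{-1}$ together with the sub-logarithmic inequality $\mathcal{A}_k(ab)\le\mathcal{A}_k(a)+\mathcal{A}_k(b)$. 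The normalizing shift $\omega_k$ exists and is unique because the limit is continuous and strictly monotone and attains the value $1$, and this defines $e\up^{k+1}t$ with $e\up^{k+1}0=1$ (Property 1). Property 7, $e\up^k e\up^{k+1}t=e\up^{k+1}(t+1)$, is inherited from the finite-stage identity $e\up^k(\Phi_k(t)+\Lambda_n(t))=\Phi_k(t+1)+\Lambda_{n-1}(t+1)$ in the limit $n\to\infty$; infinite differentiability (Property 4) is the theorem of Section \ref{sec7}, whose inductive apparatus transfers with nothing more than relabeling; and positivity of the derivative (Property 5) holds for large $t$ from $\Phi_k'>0$ and then everywhere by the infinite-descent identity $\frac{d}{dt}e\up^{k+1}t=(\frac{d}{dx}e\up^k x)\frac{d}{dt}e\up^{k+1}(t-1)$. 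Property 6 is then automatic, since a $\mathcal{C}^\infty$ strictly increasing bijection onto its image is a diffeomorphism.

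The step I expect to be the genuine obstacle is Properties 2 and 3 --- the parity-dependent domains together with the bound $-k\le\alpha_k\le 1-k$ in the even case --- which is exactly why the text defers it to Appendix \ref{appA}. The mechanism is that the maximal domain is reached by extending leftward through $e\up^{k+1}t=\mathcal{A}_k(e\up^{k+1}(t+1))$, so the admissible arguments are those whose backward iterate remains inside the domain of $\mathcal{A}_k$, namely the range of $e\up^k$. When $k$ is even that range is all of $\mathbb{R}$, so $\mathcal{A}_k$ never runs out of domain, the extension reaches every real argument, and one obtains the odd-index form $\mathbb{R}\to(\alpha_{k+1},\infty)$; when $k$ is odd the range of $e\up^k$ is only $(\alpha_k,\infty)$, so the extension halts at the first argument where the value descends to $\alpha_k$, yielding a finite left endpoint and the even-index form $(\alpha_{k+1},\infty)\to\mathbb{R}$. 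Thus the parity genuinely flips at each step. The quantitative bound, needed exactly when $k+1$ is even, is the delicate part: anchoring at $e\up^{k+1}0=1$, one computes $e\up^{k+1}(-1)=\mathcal{A}_k(1)=0$ and then tracks how many further applications of $\mathcal{A}_k$ the value $0$ survives before leaving $(\alpha_k,\infty)$, which is precisely what pins $\alpha_{k+1}$ between $-(k+1)$ and $1-(k+1)$ --- the tetration case $\alpha_2=-2$ being the explicit base of this count. Carrying out these endpoint inequalities carefully, rather than any of the now-routine convergence estimates, is where the real care is required; I would do it as in Appendix \ref{appA}, after which all seven properties of $P(k+1)$ are established and the induction closes.
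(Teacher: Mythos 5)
Your proposal is correct and follows the paper's own structure: the theorem is proved by induction on $k$, with Sections \ref{sec5}--\ref{sec7} supplying the inductive step and Appendix \ref{appA} the parity/domain analysis, exactly as you describe. Your only deviation --- anchoring the base case at $k=1$ and observing that $\Phi_1$ collapses to the $\phi$ of Section \ref{sec2}, rather than treating tetration (Sections \ref{sec2}--\ref{sec4}) as a separately worked base step --- is a harmless and arguably cleaner repackaging of the same argument.
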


\section*{In Conclusion}\label{sec12}
\setcounter{equation}{0}

We have sketched the construction of a sequence of hyper-operator functions $e \up^k t$. It is necessary to describe these solutions more acutely. The author knows no method of constructing a uniqueness criterion for this sequence, or even each function individually. Though, he suspects the \textit{eventual} monotonicity of each derivative may suffice--perhaps hinging on some extraneous growth condition. We haven't proved this property here; the author considers what he has an insufficient theorem.

It would be nice to have monotonicity on all of $\mathbb{R}^+$ for all $\frac{d^n}{dt^n}e \up^k t$--not just eventually for large enough $t>T$.  \textit{Can the reader prove this: For all $n$ there exists $T$ such for $t>T$ the function $\frac{d^n}{dt^n}e \up^k t > 0$? If they can, can they prove the stronger result that $\frac{d^n}{dt^n}e \up^k t > 0$ for all $t \in \mathbb{R}^+$? The author would be eternally grateful.}

We do not know much about these hyper-operations. The conjecture is that they're no-where analytic; I had misstepped earlier when I thought tetration would be analytic. I'd like to thank Sheldon Levenstein for pointing out this probably doesn't happen.

The author doesn't know a great deal about these questions; but he hopes to find out; and hopes others do as well.

\appendix
\section{Basic Results About Hyper-operations}\label{appA}

The first thing we're going to put in this appendix is something that is well known amongst people who have fiddled and manipulated hyper-operators on the real-line. But to those of you where all of this is novel, it may not be apparent. We start with the first hyper-operator, the exponential function,

\[
e \up x = e^x\\
\]

Now this is $\mathcal{C}^{\infty}$ on the entire real line, and it approaches $0$ as $x \to -\infty$. Further, $e^x : \mathbb{R} \to \mathbb{R}^+$ bijectively. So it has an inverse function $\log(x) : \mathbb{R}^+ \to \mathbb{R}$. Now, when we construct tetration, something interesting happens.

Since we have normalized these hyper-operators so that $e \up^k 0 =1$ we know that,

\[
e \tet -1 = \log(e \up^2 0) = 0\\
\]

And then, $e \tet -2 = \log(0) = -\infty$. So we know that, $e \tet x : (-2,\infty) \to \mathbb{R}$ bijectively. Now, it's inverse on the other hand, $\slog : \mathbb{R} \to (-2,\infty)$ bijectively. So now, when we define pentation,

\begin{eqnarray*}
e \up\up \up 0 &=& 1\\
e \up\up\up -1 = \slog(1) &=& 0\\
e \up \up \up -2 = \slog(0) &=& -1\\
\end{eqnarray*}

And as we continue, since there are no singularities to worry about with iterated $\slog$, we know that $e \up\up\up x : \mathbb{R} \to (\alpha,\infty)$ bijectively; where $\alpha$ is the limit of $\lim_{n\to\infty} \slog^{\circ n}(0)$. And therefore its inverse $\mathcal{A}_3 : (\alpha,\infty) \to \mathbb{R}$ bijectively. And at this point, we are back to the exponential case. It takes $\mathbb{R} \to$ a line equivalent to $\mathbb{R}^+$.

This will loop indefinitely, where the take away is,

\begin{eqnarray*}
e \up^k 0 &=& 1\\
e \up^k -1 &=& 0\\
e \up^k -2 &=& -1\\
&\vdots&\\
e \up^k -j &=& 1-j\,\,\text{for}\,\,0\le j<k\\
\end{eqnarray*}

And then we will either hit a singularity, or we will keep on going on off to $-\infty$. And we define a correspondence,

\begin{eqnarray*}
e \up^k x\,\, \text{continuous on}\,\,\mathbb{R}\,\,&\text{XOR}&\,\,\mathcal{A}_{k}\,\,\text{continuous on}\,\,\mathbb{R}\\
e \up^k x\,\, \text{continuous on}\,\,(\alpha_k,\infty)\,\,&\text{XOR}&\,\,\mathcal{A}_{k}\,\,\text{continuous on}\,\,(\alpha_k,\infty)\\
\end{eqnarray*}

For some $\alpha_k$. And the initiation of this idea is that for odd $k$, $e \up^k x$ is continuous on $\mathbb{R}$, and its inverse is continuous on $(\alpha_k,\infty)$. And for even $k$, $e \up^k x$ is continuous on $(\alpha_k,\infty)$, and its inverse is continuous on $\mathbb{R}$. Here $1-k \ge \alpha_k \ge -k$.

Now this exact configuration is inherent to hyper-operators as a class of objects. The moment we have a construction,

\begin{enumerate}
\item $e\up^{k} x$ is continuous and monotone\\
\item $e \up^k (x+1) = e \up^{k-1} e \up^k x$\\
\item $e \up^1 x = e^x$ and $e \up^k 0 =1$\\
\end{enumerate}

We open ourselves up to the above analysis. So as it stands, it's inherent to any solution, not just ours. This is why we kept this in the appendix; it is something someone familiar with the object would know; does not appear entirely obvious; it is not particular to our construction.

The irony being, it is very difficult to track down a single source stating this. The author learned this by way of Henryk Trappman; and a post on his tetration forum \cite{Trp}. He described much of this discussion; particularly the proof of successorship $e \up^k -j = 1-j$ for $0 \le j  < k$.

\section{Acknowledgements}\label{appB}

Much of this paper was written with implicit knowledge of what these things will look like. Much of it was written with foreknowledge of what will work, and what will not. To call this intuition is a bit of a misnomer. Much of this was learned by way of Henryk Trappman's Tetration forum \cite{Trp}.

There are years worth of discussions which have happened on this forum. Attempts made, attempts lost, attempts upon attempts by yours truly--and a whole swath of characters. Where each undoubtedly failed attempt, or didn't work quite as well as expected attempt, was a great learning experience for the author--and he knows he's not alone.

For this paper, the author would particularly like to thank Sheldon Levenstein. Not only did he point out that it was highly doubtful that this construction of tetration was holomorphic; he seemed still pleased with a $\mathcal{C}^\infty$ solution. Enough so, that the author felt it worth the while to take this paper in a different direction. Despite that, probably none of these things are analytic; there still seemed to be a point to this construction. (The author typically only thinks a thing worth his time if it's analytic/holomorphic.)

He'd like to whole-heartedly thank everyone at the forum for years of patience and curiousity driven discussions.\\

Regards, James


\begin{thebibliography}{1}

\bibitem{Eul}
Euler, Leonhard. (1921).
\textit{De serie Lambertina Plurimisque eius insignibus proprietatibus.}
Acta Acad. Scient. Petropol. 2, 29-51, 1783. 
Reprinted in Euler, L. Opera Omnia, Series Prima, Vol. 6: Commentationes Algebraicae. Leipzig, Germany: Teubner, pp. 350-369. 

\bibitem{Kne}
Kneser, Hellmuth. (1950). 
\textit{Reelle analytische Losungen der Gleichung $\varphi (\varphi (x))=e^{x}$ und verwandter Funktionalgleichungen.}
Journal fur die reine und angewandte Mathematik. 187: 56-67

\bibitem{Kou}
Kouznetsov, Dmitrii. (2009). 
\textit{Solution of $F(z+1)=\text{exp}F(z)$ in complex $z$-plane.}
Mathematics of Computation. 78. 10.1090/S0025-5718-09-02188-7. 

\bibitem{Mil}
Milnor, John. (2006).
\textit{Dynamics In One Complex Variable.}
Princeton University Press.

\bibitem{Nix}
Nixon, James D., (2019).
$\Delta y = e^{sy}$, \textit{Or: How I Learned To Stop Worrying And Love The }$\Gamma$\textit{-Function.}
arXiv

\bibitem{Trp}
The Tetration Forum,
https://math.eretrandre.org/tetrationforum/index.php

\bibitem{Wlk}
Walker, Peter. (1991).
\textit{Infinitely Differentiable Generalized Logarithmic and Exponential Functions}.
Mathematics of Computation, Vol. 57, No. 196. (Oct., 1991), pp. 723-733.
	
\end{thebibliography}
\end{document}